\def\N{{\mathbb N}}
\def\C{{\mathbb C}}
\def\A{{\mathbb A}}
\def\cK{\mathcal{K}}
\def\cO{\mathcal{O}}
\def\fm{\mathfrak{m}}
\def\fn{\mathfrak{n}}
\def\xx{\underline{x}}
\def\yy{\underline{y}}
\def\a{\alpha}
\def\b{\beta}
\def\g{\gamma}
\def\eps{\varepsilon}
\def\r{\rho}
\def\vp{\varphi}
\def\.{\cdot}
\let\circum\^
\def\^{\widehat}
\def\~{\widetilde}
\def\inj{\hookrightarrow}
\def\({\left(}
\def\){\right)}
\def\*{{}^*}
\renewcommand{\and}{ \ \ \text{ and } \ \ }
\DeclareMathOperator{\rk} {rk}
\DeclareMathOperator{\GL}  {GL}
\DeclareMathOperator{\Spf} {Spf}
\DeclareMathOperator{\Aut} {Aut}
\DeclareMathOperator{\Sing} {Sing}
\DeclareMathOperator{\Sym} {Sym}
\DeclareMathOperator{\gr} {gr}
\DeclareMathOperator{\Der}{Der}
\DeclareMathOperator{\charK}{char}
\def\isom{\simeq} 
\DeclareMathOperator{\iso}{Iso}
\theoremstyle{plain}
\newtheorem{introtheorem}{Theorem}
\newtheorem{introquestion}[introtheorem]{Question}
\crefname{introtheorem}{Theorem}{Theorems}
\newtheorem{theorem}{Theorem}[section]
\newtheorem*{theorem*}{Theorem}
\newtheorem{proposition}[theorem]{Proposition}
\newtheorem{lemma}[theorem]{Lemma}
\newtheorem{corollary}[theorem]{Corollary}
\theoremstyle{definition}
\newtheorem{definition}[theorem]{Definition}
\theoremstyle{remark}
\newtheorem{remark}[theorem]{Remark}
\newtheorem{example}[theorem]{Example}
\numberwithin{figure}{section}
\numberwithin{equation}{section}
\newcommand{\SPC}[1]{\cite[\href{https://stacks.math.columbia.edu/tag/#1}{Tag #1}]{stacks-project}
}
\begin{document}


\title{Isosingular loci of algebraic varieties}

\author{Christopher Chiu}
  \address[C.\ Chiu]{%
    Fakult\"at f\"ur Mathematik\\
    Universit\"at Wien\\
    Oskar-Morgenstern-Platz 1\\
    A-1090 Wien (\"Osterreich)%
  }
  \email{christopher.heng.chiu@univie.ac.at}
  
\author{Herwig Hauser}
  \address[H.\ Hauser]{%
    Fakult\"at f\"ur Mathematik\\
    Universit\"at Wien\\
    Oskar-Morgenstern-Platz 1\\
    A-1090 Wien (\"Osterreich)%
  }
  \email{herwig.hauser@univie.ac.at}
  
\date{\today}

\subjclass[2020]{Primary 13A50, 14B05, 14B20.}
\keywords{Isosingular loci, contact equivalence.}

\thanks{The research of the first and the second author were supported by project P-31338 of the Austrian Science Fund (FWF)}
  
\begin{abstract}
 We define the notion of isosingular loci of algebraic varieties, following the analytic case first studied by Ephraim. In particular, we give a partial extension of his main result in arbitrary characteristic and a full extension assuming characteristic $0$. One of the main obstructions in the positive characteristic case is the non-separability of the orbit map associated to the contact group, as first observed by Greuel and Pham for isolated singularities.
\end{abstract}

\maketitle


Let $X$ be a complex analytic space and $p \in X$. In \cite{Eph78} the \emph{isosingular locus} of $X$ at $p$ was defined to be the subset of $X$ with a prescribed singularity type, that is,
\[
 \iso(X,p) = \{ q \in X \mid X_q \isom X_p\},
\]
where $X_p$ and $X_q$ denote the analytic germs of $X$ at $p$ and $q$. The main result of \cite{Eph78} asserts the following:

\begin{theorem*}[\protect{\cite[Theorem 0.2, Observation 2.5]{Eph78}}]
 For a complex analytic space $X$ and $p\in X$ the set $\iso(X,p)$ is locally closed and smooth as a (reduced) analytic subspace of $X$. Furthermore, there exists a germ of an analytic space $Y_y$ such that $X_p \isom Y_y \times \iso(X,p)_p$, with $Y_y$ having the additional property that there does not exist an isomorphism of germs $Y_y \isom Y'_{y'} \times \C_0$.
\end{theorem*}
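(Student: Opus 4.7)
The plan is to realize the germ of $\iso(X,p)$ at $p$ as the orbit of $p$ under the local pseudo-group of automorphisms of $X_p$ obtained by integrating $\C$-derivations of the local ring $\cO_{X,p}$.

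I would first set $L = \Der_{\C}(\cO_{X,p}, \cO_{X,p})$ and let $T_p^{\iso} \subseteq T_p X$ be the image of the evaluation map $L \to T_p X$, $D \mapsto (f \mapsto D(f)(p))$. Choosing $D_1, \ldots, D_r \in L$ whose images form a basis of $T_p^{\iso}$, I would embed $X$ as a closed analytic subspace of a polydisc in $\C^N$ and lift each $D_i$ to an ambient analytic vector field tangent to $X$. The analytic flow theorem then yields one-parameter families of automorphisms $\phi^i_t$ of a neighborhood of $p$ in $X$, and the composition $\Phi(t_1, \ldots, t_r) = \phi^1_{t_1} \circ \cdots \circ \phi^r_{t_r}(p)$ is an immersion at $0$ whose image $S$ is a smooth $r$-dimensional analytic submanifold through $p$. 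Each $\Phi(t) \in S$ has germ isomorphic to $X_p$ via $\phi^r_{-t_r} \circ \cdots \circ \phi^1_{-t_1}$, so $S \subseteq \iso(X,p)$; this already secures the smoothness part of the theorem together with a lower bound on the dimension of the leaf.

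The main obstacle is the reverse inclusion $\iso(X,p) \subseteq S$ in a neighborhood of $p$: every $q$ close to $p$ with $X_q \simeq X_p$ must actually lie on the constructed leaf. For this I would argue that a germ-isomorphism $X_q \simeq X_p$ is presented by finitely many convergent power series satisfying finitely many relations in the local rings, hence can be placed in a family depending analytically on the base point; specialising this family along a path from $q$ to $p$ and differentiating would produce a vector field in $L$ whose integral curve connects $p$ to $q$, so $q$ lies on $S$. This is precisely where the analytic (or complete local) machinery is indispensable and is the step whose algebraic and positive-characteristic analogues drive the rest of the paper.

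Once the equality $\iso(X,p)_p = S$ is established, I would choose coordinates on the ambient polydisc in which $S$ corresponds to $\C^r \times \{0\}$. The flows $\phi^i_t$ trivialise the family $\{X_q\}_{q \in S}$ along $S$, producing an isomorphism of germs $X_p \simeq S_p \times Y_y$, where $Y_y$ is the germ of the transverse slice of $X$ at $p$. Finally, if one had $Y_y \simeq Y'_{y'} \times \C_0$, the coordinate derivation along the extra $\C_0$-factor would pull back to an element of $L$ whose evaluation at $p$ lies outside $T_p^{\iso} = T_p S$, a contradiction; thus $Y_y$ admits no further $\C$-factor, completing the proof.
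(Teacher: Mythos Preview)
First, note that the paper does not itself prove this analytic theorem; it is quoted from \cite{Eph78} as motivation, and the paper instead proves the algebraic analogues (\cref{t:thm-a,t:thm-b}). So the relevant comparison is with the methods of \cref{s:thm-a,s:thm-b}.

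Your strategy and the paper's run in \emph{opposite} directions. You start from derivations of $\cO_{X,p}$, integrate them to flows, obtain a smooth leaf $S$, and then try to show $S=\iso(X,p)$; only afterwards do you extract the transverse slice $Y_y$. The paper (and Ephraim) instead first realizes $\iso(X,p)$ as the preimage of a contact-group orbit $o(f)$ under the Taylor map $\gamma$, uses finite-dimensional truncations $\cK_\b$ together with Artin-type approximation to get local closedness, and deduces smoothness from smoothness of the orbit plus a homogeneity argument (generic smoothness propagated via common \'etale neighborhoods). Only \emph{after} $\iso(X,p)$ is known to be smooth does the paper, in \cref{l:tangent-vectors-to-derivations}, lift tangent vectors of $X^{(x)}$ to regular derivations and invoke the Nagata--Zariski--Lipman criterion to split off the smooth factor. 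Thus the existence of the derivations is a \emph{consequence} of knowing $\iso(X,p)$, not a tool used to construct it.

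This matters because your reverse inclusion $\iso(X,p)\subseteq S$ is the genuine gap. The sketch ``a germ-isomorphism $X_q\simeq X_p$ can be placed in a family depending analytically on the base point, and differentiating along a path gives a vector field in $L$'' is not an argument: an individual isomorphism of germs does not come with any analytic dependence on $q$, and producing such a family is exactly the nontrivial content that the contact-group/approximation machinery supplies. Without it you have only $S\subseteq\iso(X,p)$, which gives neither local closedness nor the correct dimension. Your final ``no smooth factor'' step is also phrased incorrectly: the extra derivation coming from a hypothetical $\C$-factor of $Y_y$ is an element of $L$, so by definition its evaluation at $p$ lies \emph{in} $T_p^{\iso}$; the contradiction you want is that it would be linearly independent of $D_1,\ldots,D_r$, contradicting $\dim T_p^{\iso}=r$, but this in turn presupposes $T_pS=T_p\iso(X,p)$, i.e.\ again the unproven reverse inclusion.
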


Generalizations of this result to the relative case of morphisms between analytic spaces were obtained in \cite{HM89,HM88}. We call an analytic germ $(X,p)$ \emph{harmonic} if $\iso(X,p) = \iso (\Sing X,p)$. In \cite{GH85} the classical Mather--Yau theorem (see \cite{MY82}) was extended from the isolated singularity case to general harmonic singularities. Results in a very similar spirit appeared recently in the preprint \cite{ES21}.

In this paper, our main goal is to study isosingular loci of an algebraic variety $X$ over an arbitrary algebraically closed field $k$. As we do not have the analytic topology at our disposal, we will replace it by considering the corresponding \emph{formal neighborhoods} instead. That is, for any $x\in X(k)$ the \emph{isosingular locus} of $X$ at $x$ is defined to be the set
\[
 \iso(X,x):=\{x'\in X \mid \^X_{x'} \isom \^X_x\},
\]
where $\^X_x$ denotes the formal completion of $X$ along the singleton $\{x\}$. Note that any isomorphism of such formal completions induces an isomorphism of residue fields and thus $\iso(X,x) \subset X(k)$. Our first main result is an extension of the first part of the above theorem in \cite{Eph78}.

\begin{introtheorem}
 \label{t:thm-a}
 Let $X$ be a variety over an algebraically closed field $k$.
 \begin{enumerate}
     \item For each $x \in X(k)$ the subset $\iso(X,x)$ is locally closed in $X(k)$ (endowed with the Zariski topology).
     \item Denote by $X^{(x)}$ the unique reduced subscheme of $X$ whose $k$-points agree with $\iso(X,x)$. Then $X^{(x)}$ is smooth.
 \end{enumerate}
\end{introtheorem}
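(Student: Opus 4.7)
My plan is to reduce both parts of the theorem to statements about orbits of an algebraic group action on a jet space, with Artin approximation bridging the formal isomorphism condition $\widehat{\cO}_{X,x'} \isom \widehat{\cO}_{X,x}$ and a finite-order contact-equivalence condition. After shrinking to an affine neighborhood and embedding $X \inj \A^n$ with $x = 0$, the completed local ring $\widehat{\cO}_{X,x'}$ (after translating $x'$ to the origin) is cut out by an ideal $\widehat{I}_{x'} \subset k[[y_1,\ldots,y_n]]$, and a formal isomorphism $\widehat{\cO}_{X,x} \isom \widehat{\cO}_{X,x'}$ corresponds precisely to an equivalence of these ideals under the contact group $K$ of formal coordinate changes fixing the origin.

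For part (1), the key step is to invoke Artin approximation in order to find an integer $N$ such that two points have isomorphic formal completions if and only if their $N$-th order truncations $I_{x'} \bmod \fm^{N+1}$ lie in the same orbit of the finite-dimensional algebraic group $K^N$ of $N$-jets of coordinate changes fixing $0$. This realizes $\iso(X,x)$ as the preimage, under the algebraic ``normalized $N$-jet'' map $X(k) \to J^N_0 X$, of a single $K^N$-orbit, and such an orbit is locally closed in its closure by the standard orbit argument for algebraic group actions (upper semi-continuity of the stabilizer dimension on the orbit closure). Hence $\iso(X,x)$ is locally closed in $X(k)$.

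For part (2), let $X^{(x)}$ denote the reduced subscheme structure on $\iso(X,x)$. My strategy is to exploit the ``local homogeneity'' of $X^{(x)}$: the formal completion $(X^{(x)})^\wedge_{x'}$ should depend only on the isomorphism class of $\widehat{\cO}_{X,x'}$, since the isosingular condition is itself a formal-local property. Because $\widehat{\cO}_{X,x'} \isom \widehat{\cO}_{X,x}$ for every $x' \in X^{(x)}$ by construction, the formal completions of $X^{(x)}$ are all mutually isomorphic. As $X^{(x)}$ is a reduced finite-type scheme over the algebraically closed (hence perfect) field $k$, it is generically smooth; combined with local homogeneity, this upgrades smoothness at a single generic point to smoothness at every closed point, yielding (2).

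The two delicate steps, I expect, are (i) obtaining a uniform finite-determinacy bound $N$ that works for all $x'$ simultaneously, since pointwise Artin approximation only provides an $N$ depending on $x'$ -- one needs either the strong form of approximation (via Popescu or Pfister--Popescu) or a careful Noetherian induction -- and (ii) making the intrinsicness claim in part (2) rigorous, that is, identifying $(X^{(x)})^\wedge_{x'}$ with a canonical formal subscheme of $\Spf \widehat{\cO}_{X,x'}$ determined solely by $\widehat{\cO}_{X,x'}$. The non-separability of the contact orbit map highlighted in the abstract suggests that any direct tangent-space argument relating $T_{x'} X^{(x)}$ to the Lie algebra image of $K$ would fail in positive characteristic; the homogeneity argument I propose for (2) sidesteps this obstruction by invoking only the reduced structure of the orbit.
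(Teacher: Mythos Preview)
Your proposal is correct and follows essentially the same route as the paper: for (1) the paper uses exactly the contact-group orbit picture together with Universal Strong Artin Approximation (\cite[Theorem 6.1]{Art69}) to obtain a uniform truncation level $\beta$, and for (2) it runs precisely your homogeneity-plus-generic-smoothness argument. The one point you flag as delicate, step (ii), is handled in the paper not by exhibiting $(X^{(x)})^\wedge_{x'}$ as an intrinsic formal subscheme of $\Spf\widehat{\cO}_{X,x'}$, but by invoking Artin approximation again to produce a common \'etale neighborhood $(U,y)$ of $x$ and $x'$ and then checking that isosingular loci pull back along \'etale maps, so that $(X^{(x)})^\wedge_{x}\isom (U^{(y)})^\wedge_{y}\isom (X^{(x)})^\wedge_{x'}$.
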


The proof follows the analytic case and uses the action of the contact group $\cK$ as well as a version of Artin's celebrated approximation result in \cite{Art69}. As for remaining part of \cite[Theorem 0.2]{Eph78}, assuming characteristic $0$ we obtain the following result:

\begin{introtheorem}
 \label{t:thm-b}
 Let $X$ be a variety over an algebraically closed field of characteristic $0$. For each $x\in X(k)$ there exists a scheme $Y$ of finite type over $k$, a point $y\in Y(k)$ and an isomorphism
 \[
  \^X_x \isom \^{(X^{(x)})}_x \times \^Y_y,
 \]
 such that $\^Y_y$ has \emph{no smooth factors}, that is, there does not exist an isomorphism $\^Y_y \isom \^Z_z \times \^{(\A^1)}_0$.
\end{introtheorem}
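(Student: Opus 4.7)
The plan is to trivialize the formal neighborhood of $X$ along the smooth subvariety $S := X^{(x)}$ (of dimension $d$, smooth by \Cref{t:thm-a}) and then restrict to a transverse slice. After passing to an étale neighborhood of $x$, we may assume $S \subset X$ is a smooth closed subvariety of dimension $d$ through $x$. Embed $X$ into affine space and choose a closed subscheme $Y \subset X$ with $x \in Y$ and $T_{Y, x} \oplus T_{S, x} = T_{X, x}$, e.g.\ by intersecting $X$ with a generic linear subspace of complementary codimension to $S$.

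The key step is to construct an étale neighborhood $(S', s_0) \to (S, x)$ and a morphism $\Phi \colon \^X_x \times_k S' \to X$ over $S'$ such that $\Phi|_{\{s_0\}}$ is the identity on $\^X_x$ and such that for every closed point $s \in S'$ the restriction $\Phi_s \colon \^X_x \to \^X_{\sigma(s)}$ is an isomorphism of formal germs, where $\sigma \colon S' \to S \hookrightarrow X$ is the composition. For each closed point of $S$ near $x$, an abstract isomorphism of formal germs exists by the definition of $S = \iso(X, x)$; these pointwise isomorphisms can be assembled into an algebraic family over an étale cover via Artin approximation applied to the functor of isomorphisms between formal completions. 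In characteristic zero, the orbit map of the contact group $\cK$ at $\^X_x$ is separable (a point raised in the introduction), which provides the required formal smoothness of this functor and unlocks the approximation step.

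Given the family $\Phi$, define $\Psi \colon Y \times_k S' \to X$ by $(y, s) \mapsto \Phi_s(y)$. At $(x, s_0)$, the differential of $\Psi$ decomposes as the sum of the inclusion $T_{Y, x} \hookrightarrow T_{X, x}$ and the map $T_{S', s_0} = T_{S, x} \hookrightarrow T_{X, x}$ induced by $\sigma$. By the transversality condition on $Y$ this differential is an isomorphism, so $\Psi$ induces an isomorphism of formal germs
\[
\^Y_x \cotimes[k] \^{S'}_{s_0} \isom \^X_x.
\]
Since the étale map $S' \to S$ induces $\^{S'}_{s_0} \isom \^{(X^{(x)})}_x$, this is the asserted factorization with $y = x$.

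Finally, the absence of smooth factors for $\^Y_x$ follows from the maximality of $d = \dim_x X^{(x)}$: if $\^Y_x \isom \^Z_z \times \^{(\A^1)}_0$, then $\^X_x \isom \^{(X^{(x)} \times \A^1)}_{(x, 0)} \times \^Z_z$, and Artin approximation produces an étale neighborhood of $x$ in $X$ that is étale over $(X^{(x)} \times \A^1) \times Z$ near a distinguished point; translating in the $\A^1$-factor gives a $(d+1)$-dimensional family of points in $X$ with formal germs isomorphic to $\^X_x$, contradicting $\dim_x X^{(x)} = d$. The principal obstacle is the construction of the family $\Phi$ in the key step, where the characteristic-$0$ hypothesis is essential—in positive characteristic, non-separability of the contact orbit map (cf.\ Greuel--Pham) obstructs the approximation argument and is exactly what prevents an unconditional proof.
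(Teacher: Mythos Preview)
Your approach is genuinely different from the paper's, and the difference is worth recording. The paper never constructs a transverse slice or an \'etale family of isomorphisms. Instead, it builds a \emph{formal} section $\Psi\colon \^{(X^{(x)})}_x \to \^\cK_1$ of the orbit map (this is where separability in characteristic $0$ enters, via \cref{l:formally-smooth}), reads off from its differential a collection of regular derivations $d_a\in\Der_k(k[[\xx]])$ preserving the ideal $(f)$ and spanning $T_x X^{(x)}$ (\cref{l:tangent-vectors-to-derivations}), and then invokes the Nagata--Zariski--Lipman criterion (\cref{t:zariski}) to split off $\^{(X^{(x)})}_x$. Your idea of using the full family of isomorphisms rather than only its first-order part is natural, and once the family is in hand your slice argument can be made to work; but the paper's route through derivations is shorter and sidesteps several of the technicalities below.

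The principal gap in your write-up is the ``key step.'' You assert that the pointwise formal isomorphisms can be assembled into an algebraic family over an \'etale cover $S'$ via Artin approximation applied to ``the functor of isomorphisms between formal completions,'' but you do not say which functor, which approximation theorem, or how its hypotheses are met. The obvious candidate functor is not locally of finite presentation (an isomorphism of formal germs carries infinitely many coefficients), so Artin approximation does not apply directly; one would have to truncate to $\cK_\b$ and control the passage to the limit. In fact the \'etale family is unnecessary: the formal section $\Psi$ above already gives you the family you need over $\^{(X^{(x)})}_x$, and everything you do afterwards only uses formal completions anyway. A second imprecision: you conclude that $\Psi$ is an isomorphism of formal germs from the fact that its differential is an isomorphism on Zariski tangent spaces, but $X$ is singular, so this requires justification. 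What makes it work is that your $\Phi_s$ come from contact group elements, hence extend to ambient automorphisms of $\^{\A^N}_0$; the induced ambient map has invertible differential and carries the ideal of $Y\times S$ to the ideal of $X$ by the contact equation. You should say this explicitly. Your argument for the absence of smooth factors is essentially the paper's \cref{l:dim-isosing}.
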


The assertion of \cref{t:thm-b} fails in positive characteristics, as can be seen in the example of the Whitney umbrella $f = x^p + y^p z$ in characteristic $p>0$, c.f.\ \cref{ex:whitney-char-p}. Let us mention that the proof of \cref{t:thm-b} crucially relies on the characteristic $0$ assumption in two separate places. First, it makes use of the Nagata--Zariski-Lipman criterion (see \cref{t:zariski}), which expresses the existence of smooth factors in terms of the existence of \emph{regular} derivations. This part of the argument could potentially be extended to positive characteristics by means of \emph{Hasse--Schmidt derivations}. The second important assumption which holds only in characteristic $0$ is that the orbit map associated to the (truncated) contact group $\cK_\b$ is separable for all $\b$ large enough. In fact, the fact that this fails in positive characteristic was first exhibited by Greuel and Pham in \cite{GP18,GP20} for certain isolated singularities. As we will see in \cref{ex:whitney-char-p-ii}, the results of \cref{s:thm-b} give new examples of this phenomenon, including the singularity $f = x^p +y^p z$ mentioned above. Moreover, it prompts the following question:

\begin{introquestion}
 Let $k$ be an algebraically closed field and $0 \in X\subset \A^N$ a hypersurface singularity given by $f \in k[x_1,\ldots,x_N]$. Assume that there exists $\b_0 > 0$ such that the orbit map $\cK_\b \to o(f_\b)$ is separable for $\b \geq \b_0$ (see \cref{s:thm-b}). Does the conclusion of \cref{t:thm-b} hold in this case?
\end{introquestion}

Note that we restrict the question to the hypersurface case as even then we do not know of any counterexample.

\subsection*{Conventions} Throughout this paper we always assume $X$ to be an algebraic variety, that is, a separated scheme of finite type over an algebraically closed field $k$. For any point $x \in X(k)$ we denote by $\^X_x$ the formal neighborhood of $X$ at $x$. That is, $\^X_x$ is given by the formal completion of $X$ along the closed subscheme $\{x\}$ and is thus isomorphic to $\Spf(\^{\cO_{X,x}})$.

%
%

\section{Contact equivalence and proof of \cref{t:thm-a}}
\label{s:thm-a}

Let $k$ be an algebraically closed field. We start by repeating the main definition: for any point $x\in X(k)$ the \emph{isosingular locus} of $X$ at $x$ is defined to be the set
\[
    \iso(X,x) := \{x'\in X \mid \^X_x \isom \^X_{x'} \}.
\]
If $X$ is smooth at $x$, then $\iso(X,x)$ is clearly an open subset of $X(k)$.

\begin{remark}
    Let us briefly discuss the situation when $k$ is not algebraically closed. Assume $\charK k = 0$ and consider the Whitney umbrella $X$ defined by $x^2+y^2z$ in $\A^3_k$. Clearly $\Sing X$ is just given by the $z$-axis. Taking any point of the form $x = (0,0,t)$ for $t\neq 0$, we see that the formal completion $\^X_x$ is isomorphic to the union of two hyperplanes if and only if $\sqrt{t} \in k$. Thus the isosingular loci of $X$ might not even be constructible as subsets of $X(k)$.
\end{remark}

Since both \cref{t:thm-a,t:thm-b} are local statements on $X$, we may assume for the rest of this paper that $X \subset \A^N$ is affine, given by polynomials $f_1,\ldots,f_n \in k[x_1,\ldots,x_N]$. To simplify notation, let us write $\xx = (x_1,\ldots,x_N)$ and $f = (f_1,\ldots,f_n)$.

Let us start by recalling a general version of the formal inverse function theorem:

\begin{lemma}
    \label{l:inv-fct-thm}
    Let $R$ be any ring and $S = R[[\xx]]$ or $S = R[\xx]/(\xx)^\b$ for some $\b > 0$. Let $\vp: S \to S$ be a map of $R$-algebras given by $\vp_i := \vp(x_i)$ and satisfying $\vp_i(0) = 0$. Then $\vp$ is an isomorphism if and only if $\det (\frac{\partial \vp_i}{\partial x_j}(0))_{i,j\leq N} \in R^*$. 
\end{lemma}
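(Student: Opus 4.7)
The plan is to prove necessity by a chain-rule argument and sufficiency by a Newton-type iteration that constructs an inverse order by order.

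For necessity, suppose $\vp$ admits an $R$-algebra inverse $\psi$. Applying the chain rule to the identity $\vp \circ \psi = \id_S$ at the origin yields $D\vp(0) \cdot D\psi(0) = \Id_N$ as matrices over $R$, so $\det (\partial \vp_i/\partial x_j)(0) \in R^*$.

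For sufficiency, I first reduce to a convenient normal form. Writing $J := (\partial \vp_i/\partial x_j)(0) \in \GL_N(R)$, I precompose $\vp$ with the $R$-algebra automorphism $x_i \mapsto \sum_j (J^{-1})_{ij} x_j$; this is an isomorphism of $S$, so it suffices to invert the composition, which has the form $\vp_i(\xx) = x_i + g_i(\xx)$ with $g_i \in (\xx)^2$. I then build a right inverse $\psi$ by setting $\psi^{(1)} := \xx$ and inductively $\psi^{(n+1)} := \psi^{(n)} - r^{(n)}$, where $r^{(n)} := \vp(\psi^{(n)}) - \xx$. Using that each $g_i$ has order at least $2$, a direct computation shows $r^{(n+1)} = g(\psi^{(n)} - r^{(n)}) - g(\psi^{(n)})$ lies in $(\xx)^{n+2}$, since it is bilinear in $r^{(n)} \in (\xx)^{n+1}$ and $\psi^{(n)} \in (\xx)$. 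Hence $r^{(n)} \in (\xx)^{n+1}$ for all $n$, and the sequence $\psi^{(n)}$ converges in the $(\xx)$-adic topology on $R[[\xx]]$ to some $\psi \in (\xx) R[[\xx]]^N$ with $\vp(\psi(\xx)) = \xx$. In the truncated setting $S = R[\xx]/(\xx)^\b$, the iterate $\psi^{(\b-1)}$ already represents the right inverse, since any further correction lies in $(\xx)^\b = 0$.

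To upgrade this right inverse to a two-sided inverse, I observe that $D\psi(0) = J^{-1} \in \GL_N(R)$ satisfies the same hypothesis, so the construction applied to $\psi$ produces a right inverse $\chi$ of $\psi$. Then $\chi = (\vp \circ \psi) \circ \chi = \vp \circ (\psi \circ \chi) = \vp$, which forces $\psi \circ \vp = \id_S$. The argument is essentially routine, so no step presents a genuine obstacle; the mildly delicate point is handling the formal and truncated cases uniformly. An alternative for the truncated case is to lift $\vp$ to an $R$-algebra endomorphism of $R[[\xx]]$, invert it using the formal case, and reduce modulo $(\xx)^\b$.
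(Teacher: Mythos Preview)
Your proof is correct. The paper takes a more conceptual (and much terser) route: it simply observes that $\vp$ induces a map on the associated graded ring $\gr_{(\xx)}(S)$, and this induced map is the graded $R$-algebra endomorphism determined in degree one by the linear map $J = (\partial\vp_i/\partial x_j)(0)$. Since the $(\xx)$-adic filtration on $S$ is separated and either complete (for $R[[\xx]]$) or bounded (for $R[\xx]/(\xx)^\b$), $\vp$ is an isomorphism if and only if $\gr(\vp)$ is, and the latter is equivalent to $J \in \GL_N(R)$. Your Newton iteration is essentially an explicit unpacking of the surjectivity half of this argument: solving $\vp(\psi)=\xx$ one degree at a time is precisely what the associated-graded viewpoint guarantees can be done. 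The graded approach is cleaner and handles both cases and both directions uniformly in one line; your approach has the virtue of being self-contained and of producing the inverse constructively. One small quibble: the word ``bilinear'' in your induction step is not quite accurate---what you actually use is that $g$ has order $\ge 2$, so every monomial contributing to $g(\psi^{(n)} - r^{(n)}) - g(\psi^{(n)})$ contains at least one factor of $r^{(n)}\in(\xx)^{n+1}$ and at least one further factor from $(\xx)$, giving total order $\ge n+2$.
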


\begin{proof}
    Follows from considering the maps on the associated graded rings.
\end{proof}

To describe the action of the contact group we fix some notation. All schemes are considered relative to the base field $k$; in particular, $R$ will always denote a $k$-algebra. By $\cO_N$ we denote the (infinite-dimensional) scheme whose $R$-points are formal series in $R[[\xx]]$. Similarly, for $\b \geq 0$ we define the (finite-dimensional) scheme $\cO_{N,\b}$ via $\cO_{N,\b}(R) := R[\xx]/(\xx)^{\b+1}$. Clearly we have $\cO_N = \varprojlim_\b \cO_{N,\b}$ via the natural maps.

For any ring $R$ we consider the group $\Aut_R(R[[\xx]])$ of local $R$-automorphisms. By \cref{l:inv-fct-thm} we have
\begin{equation}
    \label{eq:aut-grp}
    \Aut_R(R[[\xx]]) = \{\vp \in R[[\xx]]^N \mid \vp(0) = 0,\: \det (\frac{\partial \vp_i}{\partial x_j}(0))_{i,j\leq N} \in R^*\}.
\end{equation}
Thus the assignment $R \mapsto \Aut_R(R[[\xx]])$ defines a group scheme $\Aut \cO_N$. Similarly, for each $\b \geq 0$ we obtain an algebraic group $\Aut \cO_{N,\b}$ whose $R$-points are elements of $\Aut_R(R[\xx]/(\xx)^{\b+1})$.

For any $n>0$ the group scheme $\Aut \cO_N$ acts on $\cO_N^n$ as follows: given $f \in R[[\xx]]^n$ and $\vp \in \Aut_R(R[[\xx]])$, we define $\vp \cdot f := f \circ \vp = f(\vp_1,\ldots,\vp_N)$. For $\b \geq 0$ an action of $\Aut \cO_{N,\b}$ on $\cO_{N,\b}^n$ is defined in the same way. These actions are compatible via the diagram
\begin{equation}
    \label{eq:diagram}
    \xymatrix{\Aut \cO_N \times \cO_N^n \ar[r] \ar[d] & \cO_N^n \ar[d] \\
            \Aut \cO_{N,\b} \times \cO_{N,\b}^n \ar[r] & \cO_{N,\b}^n.} 
\end{equation}

\begin{definition}
    For $N,n >0$ the \emph{contact group} $\cK = \cK_{N,n}$ is defined as the group scheme $\GL_n(\cO_N) \rtimes \Aut \cO_N$. That is, for each $R$ we have
    \[
        \cK (R) = \GL_n(R[[\xx]]) \rtimes \Aut_R(R[[\xx]]),
    \]
    where the semidirect product is taken with respect to the group homomorphism $\Aut_R(R[[\xx]]) \to \Aut \GL_n(R[[\xx]])$ given by
    \[
        \vp \mapsto (M = (m_{i,j})_{i,j} \mapsto M\circ \vp = (m_{i,j}(\vp_1,\ldots,\vp_N))_{i,j}).
    \]
    Similarly, for $\b\geq0$ the \emph{truncated contact group} $\cK_\b$ is defined as the algebraic group $\GL_n(\cO_{N,\b}) \rtimes \Aut \cO_{N,\b}$ and we have $\cK = \varprojlim_\b \cK_\b$.
\end{definition}

The action of $\Aut \cO_N$ on $\cO^n_N$ together with the natural action of $\GL_n(\cO_N)$ defines an action $\r: \cK \times \cO_N^n \to \cO_N^n$ which is given by
\[
    \r: (M,\vp;f) \mapsto M\cdot (f\circ \vp).
\]
Similarly we obtain an action $\r_\b$ of $\cK_\b$ on $\cO_{N,\b}^n$ and \cref{eq:diagram} implies that $\r$ and $\r_\b$ are compatible via the diagram
\begin{equation}
    \label{eq:contact-grp-compatible}
    \xymatrix{ \cK \times \cO^n_N \ar[r]^-\r \ar[d] & \cO^n_N \ar[d] \\
            \cK_\b \times \cO_{N,\b}^n \ar[r]^-{\r_\b} & \cO_{N,\b}^n.}    
\end{equation}
As a first remark we note that $\cK_\b$ is smooth independent of the characteristic of the base field $k$.

\begin{lemma}
    \label{l:smoothness}
    For each $\b\geq0$ the truncated contact group $\cK_\b$ is smooth over $k$. Moreover, for any $f \in \cO_{N,\b}(k)$ the map $(\r_\b)_f: \cK_\b \to \cO_{N,\b}^n$ given by $(M,\vp) \mapsto M\cdot (f\circ \vp)$ has locally closed image $o(f)$, which is smooth when considered as a reduced subscheme of $\cO_{N,\b}^n$ (we call $o(f)$ the \emph{orbit} of $f$).
\end{lemma}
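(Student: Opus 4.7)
The plan is to handle the two assertions separately.

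For the smoothness of $\cK_\b$, the truncated analogue of \cref{eq:aut-grp} (obtained by applying \cref{l:inv-fct-thm} to $S = R[\xx]/(\xx)^{\b+1}$) identifies $\Aut \cO_{N,\b}$ with the open subscheme of the finite-dimensional affine space of $N$-tuples in $\cO_{N,\b}$ with vanishing constant term, cut out by the non-vanishing of the Jacobian determinant at $0$. In particular it is smooth. Likewise $\GL_n(\cO_{N,\b})$ is the locus of invertible matrices inside the affine space of $n \times n$ matrices over $\cO_{N,\b}$, hence smooth. The semidirect product $\cK_\b = \GL_n(\cO_{N,\b}) \rtimes \Aut \cO_{N,\b}$ is therefore a smooth algebraic group of finite type over $k$.

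For the orbit $o(f)$, I would invoke the standard theory of algebraic group actions on schemes of finite type. Since $\cK_\b$ acts on the affine $k$-scheme of finite type $\cO_{N,\b}^n$, Chevalley's theorem applied to the orbit map $(\r_\b)_f$ shows that its image is constructible. The transitivity of the $\cK_\b$-action on this image, together with the general fact that a constructible set contains an open dense subset of its closure, then forces $o(f)$ to be open in $\ov{o(f)}$; hence $o(f)$ is locally closed.

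To verify smoothness of $o(f)$, regarded as a reduced subscheme of $\cO_{N,\b}^n$, I would use the transitive $\cK_\b$-action together with generic smoothness. Since $k$ is algebraically closed, $o(f)$ is a reduced $k$-scheme of finite type, so its smooth locus $U \subset o(f)$ is open and nonempty. Each element of $\cK_\b(k)$ acts as a $k$-scheme automorphism of $\cO_{N,\b}^n$ mapping $o(f)$ isomorphically onto itself, so $U$ is $\cK_\b(k)$-invariant. Since $\cK_\b(k)$ acts transitively on the $k$-points of $o(f)$ (and $k$-points are Zariski dense in $o(f)$), the complement $o(f) \setminus U$ can contain no closed point and is therefore empty, so $o(f) = U$ is smooth.

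No step presents a serious obstacle. The only mild subtlety, relevant in positive characteristic, is that the orbit map $\cK_\b \to o(f)$ may fail to be separable (equivalently, the scheme-theoretic stabilizer of $f$ may fail to be reduced); but this has no bearing on the smoothness of $o(f)$ itself, and only becomes important in the more delicate analysis needed for \cref{t:thm-b}.
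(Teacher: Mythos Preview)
Your proof is correct and follows the same approach as the paper: for the first assertion you, like the paper, observe that both factors of $\cK_\b$ are open subschemes of affine spaces; for the second assertion the paper simply cites \cite[Proposition 7.4]{Mil17}, and your argument via Chevalley's theorem plus transitivity is precisely the standard proof of that proposition. Your closing remark on separability is apt context but not needed here.
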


\begin{proof}
    For the first assertion note that the underlying space of $\cK_\b$ is $\GL_n(\cO_{N,\b}) \times \Aut \cO_{N,\b}$ and that we have open immersions $\GL_n(\cO_{N,\b}) \inj \A^{n^2 M}$ and $\Aut \cO_{N,\b} \inj \A^{N M}$, where $M := \binom{N+\b}{N}$. The second assertion then follows from \cite[Proposition 7.4]{Mil17}.
\end{proof}

The main idea behind the use of the contact group in the proof of \cref{t:thm-a} is that two points of $X$ have isomorphic formal neighborhoods if and only if the respective Taylor expansions of $f$ at those points lie in the same orbit of $\cK$. We want to review this classical argument in our setting.
First, let $\g: X \to \cO_N^n$ be the morphism defined as follows: for each $k$-algebra $R$ the map $\g(R)$ is given by
\[
    a \in X(R) \subset R^N \mapsto f(\xx+a) \in R[[\xx]].
\]
In an analogous way we obtain morphisms $\g_\b: X \to \cO_{N,\b}^n$ for $\b \geq 0$.

\begin{lemma}
    \label{l:formal-eq-contact-eq}
    Let $x_1,x_2 \in X(k)$. Then $\^X_{x_1} \isom \^X_{x_2}$ if and only if $\g(x_2) \in o(\g(x_1))$.
\end{lemma}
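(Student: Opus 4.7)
My plan is to reduce the question to a statement about ideals in $k[[\xx]]$ and then identify the $\cK$-orbit relation with isomorphism of quotients.

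Set $I_i := (\gamma(x_i)_1,\ldots,\gamma(x_i)_n) \subset k[[\xx]]$. Translating coordinates at $x_i$ identifies $\widehat{\cO_{X,x_i}}$ with $k[[\xx]]/I_i$, so $\widehat{X}_{x_1}\isom\widehat{X}_{x_2}$ is equivalent to the existence of a $k$-algebra isomorphism $\psi: k[[\xx]]/I_1 \to k[[\xx]]/I_2$. On the other hand, by \eqref{eq:aut-grp} and \cref{l:inv-fct-thm}, the statement $\gamma(x_2)\in o(\gamma(x_1))$ unpacks as the existence of $M\in\GL_n(k[[\xx]])$ and a $k$-algebra automorphism $\varphi^*$ of $k[[\xx]]$ (corresponding to $\varphi\in\Aut\cO_N(k)$) with $\gamma(x_2) = M\cdot\varphi^*(\gamma(x_1))$. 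The direction $(\Leftarrow)$ is then immediate: invertibility of $M$ forces $\varphi^*(I_1) = I_2$ as ideals, and $\varphi^*$ descends to the desired $\psi$.

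For $(\Rightarrow)$ I would proceed in two steps. First, lift $\psi$ to an automorphism of $k[[\xx]]$: any choice of lifts $\varphi_i\in\frm$ of $\psi(x_i\bmod I_1)$ defines an endomorphism $x_i\mapsto\varphi_i$ inducing $\psi$, and by \cref{l:inv-fct-thm} it is an automorphism iff $\{\overline{\varphi_i}\}\subset\frm/\frm^2$ is a $k$-basis. Since $\psi$ is automatically local, the $\overline{\varphi_i}$ together with the image of $I_2$ in $\frm/\frm^2$ always span the whole space; a short linear algebra argument then furnishes $a_i\in I_2$ such that, after replacing $\varphi_i$ by $\varphi_i+a_i$ (which does not alter the induced map $\psi$), the classes $\{\overline{\varphi_i}\}$ form a basis. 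The corrected $\varphi^*$ is then an automorphism with $\varphi^*(I_1)=I_2$. Second, the tuples $g := \varphi^*(\gamma(x_1))$ and $g' := \gamma(x_2)$ both generate $I_2$ in $k[[\xx]]$; a standard Nakayama-type argument (match the induced surjections $k^n\twoheadrightarrow I_2/\frm I_2$ by some $\overline{M}\in\GL_n(k)$, lift to $M_1\in\GL_n(k[[\xx]])$, and correct by a matrix in $M_n(\frm)$ whose rows are syzygies of $g$) produces $M\in\GL_n(k[[\xx]])$ with $g'=Mg$. Hence $\gamma(x_2)=M\cdot\varphi^*(\gamma(x_1))\in o(\gamma(x_1))$.

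The main obstacle is the lifting step in the converse direction: a naive lift of $\psi$ need not be an automorphism of $k[[\xx]]$, precisely when $I_2\not\subset\frm^2$ (equivalently, when $\widehat{X}_{x_2}$ has embedding dimension strictly less than $N$). The correction by elements of $I_2$ described above is the essential technical device that resolves this.
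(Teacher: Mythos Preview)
Your proof is correct and follows the same overall skeleton as the paper's: reduce to ideals in $k[[\xx]]$, identify isomorphism of completions with the existence of $\varphi\in\Aut_k(k[[\xx]])$ carrying one ideal to the other, and then upgrade the equality of ideals to an explicit relation $\gamma(x_2)=M\cdot\varphi^*(\gamma(x_1))$ with $M\in\GL_n(k[[\xx]])$. The differences lie in the two technical steps.

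For the lifting step the paper simply asserts the equivalence with the existence of $\varphi$ and moves on, whereas you spell out the potential obstruction (embedding dimension $<N$, i.e.\ $I_2\not\subset\fm^2$) and give the correction by elements of $I_2$. Your argument here is more complete than what the paper writes down. For the matrix step the paper takes a different route: instead of your Nakayama-type argument matching the surjections $k^n\twoheadrightarrow I_2/\fm I_2$, it invokes a small linear-algebra trick of Mather, namely that for any $A,B\in k^{n\times n}$ there exists $C\in k^{n\times n}$ with $C(1-AB)+B$ invertible. Applied to matrices $A,B\in k[[\xx]]^{n\times n}$ expressing each generating tuple in terms of the other, this produces directly an invertible $D$ with $D\cdot f(\xx+a)=f(\varphi(\xx))$. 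The Mather trick has the mild advantage of not using the local structure at all, while your Nakayama approach is perhaps more transparent conceptually; both are perfectly valid.
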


\begin{proof}
    We may assume that $x_1 = 0$ and $x_2 = a \in k^N$. Then
    \[
        \^{\cO_{X,x_2}} \isom k[[\xx]]/(f(\xx+a)).
    \]
    The existence of an isomorphism $\^{\cO_{X,x_1}} \isom \^{\cO_{X,x_2}}$ is equivalent to the existence of an isomorphism $\vp \in \Aut_k(k[[\xx]])$ such that the following equality of ideals holds:
    \[
        (\vp(f(\xx)))=(f(\xx+a)).
    \]
    To proceed we make use of a trick of Mather:
 
 \begin{lemma}
 \label{l:mather}
    Let $A,B\in k^{n\times n}$. Then there exists $C\in k^{n\times n}$ such that $C(1-AB)+B$ is invertible.
 \end{lemma}

 \begin{proof}
    Let $r=\rk(B)$ and choose a basis $\{e_i\}_i$ for $k^n$ such that $Be_1,\ldots,Be_r$ are linearly independent and $Be_i=0$ for $i>r$. Let $e'_{r+1},\ldots,e'_n$ be such that $Be_i,e'_j$ form a basis. Then $C$ is the matrix representing the linear map given by $e_i\mapsto 0$ for $i\leq r$ and $e_i\mapsto e'_i$ for $i>r$.
 \end{proof}
 
    Write $R = k[[\xx]]$ and $\fm = (\xx)$. The above equality of ideals implies the existence of $A,B \in R^{n \times n}$ with $A \cdot f(\vp(\xx)) = f(\xx+a)$ and $B \cdot f(\xx+a) = f(\vp(\xx))$. Then, by \cref{l:mather}, there exists $C\in k^{n\times n}$ with $D = C(1-AB)+B \in R^{n \times n}$ invertible modulo $\fm$, which implies that $D$ is invertible in $R^{n\times n}$. Clearly, $D \cdot f(\xx+a) = f(\vp(\xx))$.
\end{proof}

Note the contact group $\cK$ is not of finite type and neither is the scheme $\cO_N^n$. In order to be able to apply \cref{l:smoothness} for the action of the truncated contact group $\cK_\b$ we make use of a variant of Artin's celebrated approximation results, which is commonly referred to as \emph{Universal Strong Artin Approximation}.

\begin{theorem}[\protect{\cite[Theorem 6.1]{Art69}}]
    \label{t:uniform-strong-artin-approximation}
    Let $k$ be a field. For each tuple $(n,m,d,\a)$ of non-negative integers there exists a $\b\geq0$ satisfying the following property: let $\xx=(x_1,\ldots,x_n)$ and $\yy=(y_1,\ldots,y_m)$ be two sets of variables and $f=(f_1,\ldots,f_r) \in k[\xx,\yy]^r$ with $\deg(f_i)\leq d$. Assume there exist polynomials $\bar{y}=(\bar{y}_1,\ldots,\bar{y}_m) \in k[\xx]^m$ with
    \[
        f(\xx,\bar{y}(\xx)) \equiv 0 \mod (\xx)^\b.
    \]
    Then there exist algebraic power series $y=(y_1,\ldots,y_m)$ with \\  $f(\xx,y(\xx))=0$ and
    \[
        \bar{y} \equiv y(\xx) \mod (\xx)^\a.
    \]
\end{theorem}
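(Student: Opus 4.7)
The plan is to derive the uniform bound from Artin's basic approximation theorem via a compactness argument, implemented through ultraproducts. Suppose the statement fails for some fixed tuple $(n,m,d,\a)$. Then for each $\b \geq 0$ we may choose a counterexample: polynomials $f^{(\b)} \in k[\xx,\yy]^r$ with $\deg f^{(\b)}_i \leq d$ and polynomials $\bar{y}^{(\b)} \in k[\xx]^m$ satisfying $f^{(\b)}(\xx, \bar{y}^{(\b)}) \equiv 0 \mod (\xx)^\b$, for which no algebraic power series tuple $y \in k\langle\xx\rangle^m$ simultaneously fulfills $f^{(\b)}(\xx, y) = 0$ and $y \equiv \bar{y}^{(\b)} \mod (\xx)^\a$.

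Fix a non-principal ultrafilter $\cU$ on $\N$ and write $K := k^\cU$. Since the $f^{(\b)}$ have uniformly bounded degree $\leq d$, their coefficient vectors range over a fixed finite-dimensional $k$-vector space, so their ultraproduct yields a single polynomial system $f^* \in K[\xx,\yy]^r$ of degree $\leq d$. Regarding each $\bar{y}^{(\b)}$ as an element of $k[[\xx]]^m$, assemble $\bar{y}^* \in K[[\xx]]^m$ coefficient-by-coefficient from the ultraproducts of the individual scalar coefficients. Because $f^{(\b)}(\xx, \bar{y}^{(\b)}) \equiv 0 \mod (\xx)^\b$ holds with $\b \to \infty$ along $\cU$, the standard ultraproduct transfer principle promotes this to the exact identity $f^*(\xx, \bar{y}^*) = 0$ in $K[[\xx]]$.

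Next, invoke Artin's basic approximation theorem over the excellent Henselian local ring $K\langle\xx\rangle = K[\xx]_{(\xx)}^h$: since $\bar{y}^*$ is now an \emph{exact} formal solution of $f^* = 0$, there exists an algebraic power series solution $y^* \in K\langle\xx\rangle^m$ with $f^*(\xx, y^*) = 0$ and $y^* \equiv \bar{y}^* \mod (\xx)^\a$. The remaining and most delicate step is the descent back to $k$: algebraicity of each component $y^*_i$ is witnessed by a polynomial $P_i \in K[\xx, T_i]$ of some fixed degree together with a Hensel factorization isolating the root $y^*_i$. Once the degrees of the $P_i$ are fixed by $y^*$, all the relevant data---the coefficient vectors of $f^*$, of the $P_i$, of $y^*$ up to order $\a$, and the finitely many scalar identities encoding the relations $P_i(\xx, y^*_i) = 0$, the Hensel conditions, and $f^*(\xx, y^*) = 0$---constitute a single first-order statement on finitely many $K$-valued parameters. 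The ultraproduct transfer principle then produces the analogous algebraic witnesses over $k$ for a $\cU$-positive set of indices $\b$, yielding the forbidden algebraic power series $y^{(\b)} \in k\langle\xx\rangle^m$, a contradiction. The heart of the argument is precisely this reduction of algebraicity to a first-order statement, made possible by extracting uniform degree bounds from the single ultraproduct solution $y^*$.
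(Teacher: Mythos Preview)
The paper does not prove this theorem at all: it is simply quoted from \cite[Theorem~6.1]{Art69} and used as a black box. So there is no ``paper's proof'' to compare against. Your ultraproduct reduction to the ordinary (non-strong) Artin approximation theorem is a well-known alternative route, due essentially to Becker--Denef--Lipshitz, and the overall strategy you describe is sound. However, the descent step, which you yourself flag as the heart of the matter, is not carried out correctly as written.

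There are two related imprecisions. First, it is not true that every algebraic power series $y^*_i \in K\langle\xx\rangle$ can be isolated by a Hensel condition on a single-variable polynomial $P_i \in K[\xx,T_i]$. Take $y^* = x_1\sqrt{1+x_1}$ (one variable, $\charK k \ne 2$): its minimal polynomial is $Q(T)=T^2-x_1^2(1+x_1)$, any $P$ vanishing on $y^*$ is a multiple of $Q$, and one checks $\partial_T P(0,0)=0$ for every such multiple; moreover $Q(0,T)=T^2$ admits no coprime factorization, so no ``Hensel factorization'' of the kind you invoke separates the two roots $\pm x_1\sqrt{1+x_1}$. Second, the relations $P_i(\xx,y^*_i)=0$ and $f^*(\xx,y^*)=0$ are identities in $K[[\xx]]$ and thus consist of infinitely many scalar equations; they are not literally ``finitely many scalar identities'' as you write.

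Both issues are repaired by the same standard device. Since $K\langle\xx\rangle$ is the filtered union of local \'etale $K[\xx]_{(\xx)}$-algebras, one can choose a presentation with \emph{auxiliary} variables: polynomials $g_1,\ldots,g_s \in K[\xx,\yy,\mathbf{w}]$ and a base point $(0,y^*(0),w_0)$ at which $g=0$ and $\det(\partial g/\partial(\yy,\mathbf{w}))\ne 0$, so that the unique Hensel lift is $(y^*,w^*)$. Then $f^*_j(\xx,y^*)=0$ is equivalent to $h_j\cdot f^*_j(\xx,\yy)\in (g_1,\ldots,g_s)$ in $K[\xx,\yy,\mathbf{w}]$ for some $h_j$ not vanishing at the base point. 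Once $y^*$ is found over $K$, the numbers $s$ and the degrees of $g,h_j$ and of the multipliers are fixed, and the whole package --- the Jacobian condition, the ideal-membership identities, and the congruence $y\equiv\bar y^*\bmod(\xx)^\a$ (whose left side is a polynomial expression in the coefficients of $g$ and the base point) --- becomes a genuine first-order sentence over $K$. {\L}o\'s then pushes it down to $k$ for $\cU$-many $\b$, and Hensel's lemma over $k$ reconstructs the forbidden $y^{(\b)}\in k\langle\xx\rangle^m$. With this correction your argument goes through.
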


\begin{corollary}
    \label{c:trunc-contact-eq}
    Let $x_1,x_2 \in X(k)$. Then there exists a $\b>0$ such that we have $\g(x_2) \in o(\g(x_1))$ if and only if $\g_\b(x_2) \in o(\g_\b(x_1))$.
\end{corollary}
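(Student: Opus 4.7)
The ``only if'' direction is immediate and holds for every $\b \geq 0$: the compatibility diagram \cref{eq:contact-grp-compatible} shows that reducing a witness $(M,\vp)\in\cK(k)$ of $\g(x_2)\in o(\g(x_1))$ modulo $(\xx)^{\b+1}$ yields an element of $\cK_\b(k)$ realizing $\g_\b(x_2)\in o(\g_\b(x_1))$.

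For the converse direction, the plan is to encode the existence of a contact equivalence between $\g(x_1)$ and $\g(x_2)$ as the solvability of an explicit polynomial system over $k[[\xx]]$, and then invoke \cref{t:uniform-strong-artin-approximation}. We introduce unknown power series: an $n\times n$ matrix $y_M=(y_{M,ij})$ (to play the role of $M$), an $N\times N$ matrix $y_\eta=(y_{\eta,ij})$ used to parametrize the components of $\vp$ as $\vp_i(\xx):=\sum_j x_j\,y_{\eta,ij}(\xx)$ (which automatically enforces $\vp_i(0)=0$), and two auxiliary scalars $y_u,y_v$. The system consists of
\begin{align*}
    & y_M\cdot f(\vp+x_1)-f(\xx+x_2)=0, \\
    & \det(y_M)\cdot y_u-1=0, \\
    & \det(y_\eta)\cdot y_v-1=0.
\end{align*}
A power-series solution gives, via \cref{l:inv-fct-thm}, an element $(M,\vp)\in\cK(k)$ witnessing $\g(x_2)\in o(\g(x_1))$, and conversely every such witness yields a solution.

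Crucially, the number of unknowns and the maximal degree of the defining polynomials depend only on $N$, $n$, and $\deg f$, not on the points $x_1,x_2$. Apply \cref{t:uniform-strong-artin-approximation} with $\a=1$, which suffices to preserve the two invertibility conditions, since these depend only on the constant terms of $y_M$ and $y_\eta$. This yields a $\b_0\geq 0$ such that any approximate solution modulo $(\xx)^{\b_0}$ may be corrected to an exact (algebraic, hence formal) solution agreeing with it modulo $(\xx)$. Now, given $(\bar M,\bar\vp)\in\cK_{\b_0}(k)$ realizing $\g_{\b_0}(x_2)\in o(\g_{\b_0}(x_1))$, polynomial representatives of $\bar M$ and $\bar\vp$ (the latter decomposed as $\bar\vp_i=\sum_j x_j\bar y_{\eta,ij}$), together with $\bar y_u,\bar y_v$ chosen to satisfy the two determinant equations modulo $(\xx)^{\b_0+1}$, constitute an approximate solution of the system. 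Artin approximation then produces the desired $(M,\vp)\in\cK(k)$, and the corollary holds with $\b = \b_0$.

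The main technical subtlety lies in translating the ``open'' invertibility and vanishing-at-zero conditions on $(M,\vp)$ into equalities suitable for \cref{t:uniform-strong-artin-approximation}. The introduction of the auxiliary unknowns $y_u,y_v$ reduces invertibility to a polynomial identity, while the substitution $\vp_i=\sum_j x_j\,y_{\eta,ij}$ enforces $\vp(0)=0$ at the harmless cost of a non-unique parametrization. Once these translations are in place, the uniformity in Artin's theorem does all the real work.
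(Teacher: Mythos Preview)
Your proof is correct and follows essentially the same route as the paper's: encode contact equivalence as a polynomial system and invoke \cref{t:uniform-strong-artin-approximation}. The only difference is bookkeeping for the side conditions: the paper works directly with unknowns $M$ and $\yy$, chooses $\a\gg 0$, and observes that since the approximate solution already has $\bar y(0)=0$ and invertible $\bar M(0)$, $\det(\partial \bar y_i/\partial x_j)(0)$, congruence modulo $(\xx)^\a$ forces the exact solution to inherit these; you instead hard-wire $\vp(0)=0$ via the parametrization $\vp_i=\sum_j x_j\,y_{\eta,ij}$ and the two invertibility constraints via auxiliary determinant equations, so that any exact solution automatically lies in $\cK(k)$ and $\a=1$ suffices. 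Both devices are standard and neither changes the substance of the argument.
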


\begin{proof}
    Assume $x_1 = 0$ and let $x_2$ be given by $a = (a_1,\ldots,a_n) \in k^n$. Consider the system of equations in variables $M = (M_{ij})$ and $\yy = (y_1,\ldots,y_N)$ over $k[\xx]$ given by
    \[
        M \cdot f(\yy) - f(\xx+a) = 0.
    \]
    The condition $\g_\b(x_2) \in o(\g_\b(x_1))$ is equivalent to the existence of $(\bar{M}(\xx),\bar{y}(\xx))$ such that $\det(\bar{M}(0))$, $\det(\frac{\partial \bar{y}_i}{\partial x_j}(0))\in k^*$. Take $\a \gg 0$. By \cref{t:uniform-strong-artin-approximation} there a $\b>0$ such that, for each solution $(\bar{M}(\xx),\bar{y}(\xx))$ of this system module $(\xx)^\b$, there exists a solution $(M(\xx),y(\xx))$ with $\bar{M}(\xx) - M(\xx), \bar{y}(\xx) - y(\xx) \in (\xx)^\a$. Thus, in particular $\det(M(0))$, $\det(\frac{\partial y_i}{\partial x_j}(0)) \in k^*$, which in turn implies that $\g(x_2) \in o(\g(x_1))$. The other direction follows from the diagram \cref{eq:contact-grp-compatible}.
\end{proof}

Thus we obtain the first assertation of \cref{t:thm-a}:

\begin{proposition}
\label{t:isosingular-loc-cl}
    Let $X$ be a variety over an algebraically closed field $k$ and let $x \in  X(k)$. Then $\iso(X,x)$ is locally closed as a subset of $X(k)$.
\end{proposition}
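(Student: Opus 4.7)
The plan is to combine \cref{l:formal-eq-contact-eq,c:trunc-contact-eq,l:smoothness} to express $\iso(X,x)$ as the pullback of a locally closed subset along a morphism of finite-type schemes, from which the conclusion follows. By \cref{l:formal-eq-contact-eq} we may rewrite
\[
 \iso(X,x) = \{x'\in X(k)\mid \g(x')\in o(\g(x))\},
\]
so that $\iso(X,x) = \g^{-1}(o(\g(x)))$ set-theoretically. The difficulty is that the target $\cO_N^n$ is not of finite type and $\cK$ is only a pro-algebraic group, so we cannot directly invoke \cref{l:smoothness} here.

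To remedy this I would first use \cref{c:trunc-contact-eq} to reduce to a finite truncation. Fixing $x\in X(k)$, I would claim that there exists a single $\b\geq 0$, depending on $x$ but \emph{not} on the auxiliary point $x'$, such that for all $x'\in X(k)$ one has $\g(x')\in o(\g(x))$ if and only if $\g_\b(x')\in o(\g_\b(x))$. The uniformity in $x'$ follows by inspecting the proof of \cref{c:trunc-contact-eq}: the application of \cref{t:uniform-strong-artin-approximation} requires only bounds on the number of variables, the number of equations, the degrees of the $f_i(\xx+a)$ (which are independent of $a$) and the chosen $\a$, all of which are uniform as $x'$ varies.

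With such a $\b$ in hand, we obtain the identification
\[
 \iso(X,x) = \g_\b^{-1}\bigl(o(\g_\b(x))\bigr).
\]
\cref{l:smoothness} asserts that $o(\g_\b(x))$ is locally closed in the finite-dimensional affine scheme $\cO_{N,\b}^n$. Since $\g_\b\colon X \to \cO_{N,\b}^n$ is a morphism of finite-type $k$-schemes, the preimage of a locally closed subset is again locally closed, giving the result on the level of $k$-points.

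The main obstacle is the uniformity statement above: a priori the $\b$ produced by \cref{c:trunc-contact-eq} depends on the pair $(x_1,x_2)$, whereas for the present argument we must fix $\b$ once $x_1 = x$ is chosen and have the equivalence hold for every $x_2 = x'\in X(k)$. Once this uniform $\b$ is secured, the remaining steps are formal and follow immediately from the two lemmas cited.
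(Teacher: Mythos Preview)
Your proposal is correct and follows exactly the paper's approach, whose proof is the one-line citation of \cref{l:smoothness,l:formal-eq-contact-eq,c:trunc-contact-eq}. You have moreover correctly identified and resolved the uniformity issue that the paper leaves implicit: in the proof of \cref{c:trunc-contact-eq} the bound $\b$ supplied by \cref{t:uniform-strong-artin-approximation} depends only on the tuple $(n,m,d,\a)$ and not on the particular polynomials, so in particular not on the translation parameter $a$, which gives the required uniformity in $x'$.
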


\begin{proof}
    Follows from \cref{l:smoothness,l:formal-eq-contact-eq,c:trunc-contact-eq}.
\end{proof}

\begin{remark}
    Let us mention that the isosingularity loci $\iso(X,x)$ do not give a stratification of $X$ in general, since $X$ might have infinitely many points with distinct singularities. Consider the classical example by Whitney \cite[Example 13.2]{Whi65}, which is $X \subset \A^3$ defined by $f = xy(x+y)(x+zy)$. For each point $x = (0,0,a) \in \Sing X$ the associated tangent cone is a union of four planes, which have a well-defined cross-ratio depending on $a$. As any formal isomorphism induces a linear isomorphism of tangent cones we see that $\iso(X,x) = \{x\}$.
\end{remark}

\cref{t:isosingular-loc-cl} leads us the make the following definition:

\begin{definition}
    Let $X$ be a scheme of finite type over an algebraically closed field $k$. For each $x \in X(k)$ we define $X^{(x)}$ to be the unique reduced subscheme of $X$ whose $k$-points equal $\iso(X,x)$ and call it the \emph{isosingularity scheme} associated to $x$.
\end{definition}

To finish the proof of \cref{t:thm-a} we aim to show that $X^{(x)}$ is smooth. As we will see in the next section, we cannot conclude directly using that the orbit $o(\g(x))$ is smooth. Our strategy is therefore to use generic smoothness to establish the existence of $x'\in \iso(X,x)$ such that $X^{(x)}$ is smooth at $x'$ and then extend the isomorphism $\^X_x \isom \^X_{x'}$ \'etale-locally. To that end, recall that an \emph{\'etale neighborhood} $(U,y)$ of $x \in X(k)$ is an \'etale morphism $u: U \to X$ and $y \in U(k)$ with $u(y) = x$. Artin's approximation results then imply the following corollary:

\begin{lemma}[\protect{\cite[Corollary 2.5]{Art69}}]
    \label{l:artin-etale-nbhd}
    Let $x \in X(k)$ and $x' \in \iso(X,x)$, that is, $\^X_x \isom \^X_{x'}$. Then there exists a common \'etale neighborhood $(U,y)$ of $x$ and $x'$, that is, a diagram of \'etale morphisms
    \[
        \xymatrix{ & U \ar[ld]_u \ar[rd]^{u'} & \\ X & & X} 
    \]
    and $y \in U(k)$ with $u(y)=x$ and $u'(y)=x'$.
\end{lemma}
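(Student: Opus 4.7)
My approach is to reduce the construction of a common \'etale neighborhood to exhibiting an isomorphism of henselizations $\O^h_{X,x} \isom \O^h_{X,x'}$, and to produce the latter from the given formal isomorphism via Artin's approximation theorem. For the reduction, note that since $k$ is algebraically closed and $X$ is of finite type, the henselization $\O^h_{X,x'}$ is the filtered colimit of $\O_{U,y}$ over pointed \'etale neighborhoods $(U,y)$ of $x'$, and similarly for $x$. Any isomorphism $\Phi: \O^h_{X,x} \isom \O^h_{X,x'}$ is then described by finitely many elements, which are therefore realized in a single \'etale $k$-algebra $\O_{U,y}$; the corresponding $U$, equipped with its two structural \'etale maps to $X$, provides the diagram claimed in the lemma.

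To construct $\Phi$, I would write $X = V(f) \subset \A^N$ with $f = (f_1, \ldots, f_n)$, normalize $x = 0$ and $x' = a \in k^N$, and introduce unknown $N$-tuples $\f(\yy), \ff(\xx)$ together with $n \times n$ matrices $M, M'$. The relevant polynomial system is
\[
 f(\f(\yy)) = M \cdot f(\yy + a), \qquad f(a + \ff(\xx)) = M' \cdot f(\xx),
\]
together with further relations (encoded by auxiliary matrices) expressing $\ff(\f(\yy)) \equiv \yy \pmod{(f(\yy+a))}$ and $\f(\ff(\xx)) \equiv \xx \pmod{(f(\xx))}$. A solution is precisely a pair of mutually inverse local homomorphisms between $\^{\O_{X,x}}$ and $\^{\O_{X,x'}}$; the invertibility of the candidate linear parts is automatic from \cref{l:inv-fct-thm} once the inverse relations are imposed. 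The hypothesis $\^X_x \isom \^X_{x'}$ furnishes a solution over the completion, and Artin's approximation theorem \cite{Art69} then yields one over the henselizations, giving the desired $\Phi$.

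The main technical obstacle is that Artin approximation only guarantees agreement with the given formal solution up to some prescribed order, so merely approximating $\Phi$ alone would produce a local homomorphism rather than an isomorphism, and one would need a delicate perturbation argument to promote it. Incorporating the inverse $\Phi^{-1}$ as additional unknowns and baking the identities $\Phi \circ \Phi^{-1} = \id$, $\Phi^{-1} \circ \Phi = \id$ directly into the polynomial system converts ``being an isomorphism'' into a purely algebraic condition, which makes Artin's theorem directly applicable.
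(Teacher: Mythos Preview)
The paper does not supply its own proof of this lemma; it simply quotes \cite[Corollary~2.5]{Art69}. Your overall strategy---pass from the formal isomorphism to an isomorphism of henselizations via Artin approximation, then spread this out to a common \'etale neighborhood---is exactly the standard argument, and is essentially what Artin does.

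There is, however, a genuine gap in your implementation. Artin approximation applies to systems of the form $F(\yy,Y)=0$ with $F$ \emph{polynomial} in the unknowns $Y=(Y_1,\ldots,Y_m)$, and one seeks series $Y_i(\yy)$. The relation $\ff(\f(\yy))\equiv \yy$ is not of this shape: it asks you to substitute one unknown series into another, and there is no polynomial $F(\yy,\ff,\f)$ whose vanishing expresses this. So you cannot feed the identities $\Phi\circ\Phi^{-1}=\id$ and $\Phi^{-1}\circ\Phi=\id$ directly into Artin's theorem as you propose.

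The fix is to encode invertibility differently. Keep all unknowns as series in a single set of variables $\yy$, and replace the composition constraints by the pair
\[
 f(\f(\yy)) = M\cdot f(\yy+a), \qquad f(\yy+a) = M''\cdot f(\f(\yy)),
\]
together with $M'M=I$ and $M'''M''=I$; all of these are genuinely polynomial in the unknowns $\f,M,M',M'',M'''$. Approximating to order $\geq 2$ preserves the linear part of $\f$, so the algebraic solution $\tilde\f$ still has $\tilde\f(0)=0$ and invertible Jacobian at $0$, hence defines an automorphism of the henselization $k\{\yy\}$ by \cref{l:inv-fct-thm}. The two displayed equations then give the equality of ideals $(f(\tilde\f(\yy)))=(f(\yy+a))$ in $k\{\yy\}$, which is precisely the desired isomorphism $\cO^h_{X,x}\isom\cO^h_{X,x'}$. (Alternatively, approximate only the first equation and observe that the resulting surjection of Noetherian local rings with isomorphic completions must be an isomorphism.) From there your extraction of the common \'etale neighborhood is fine.
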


\begin{lemma}
    \label{l:isosing-etale}
    Let $f: U \to X$ be \'etale and $y \in U(k)$ with $x = f(y) \in X(k)$. Then $f^{-1}(\iso(X,x))=\iso(U,y)$ and the restriction $U^{(y)} \to X^{(x)}$ is \'etale.
\end{lemma}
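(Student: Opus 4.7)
The plan is to first establish the set-theoretic equality $f^{-1}(\iso(X,x)) = \iso(U,y)$ and then to identify $U^{(y)}$ with the scheme-theoretic preimage $f^{-1}(X^{(x)})$; from there, étaleness of the restriction is immediate by base change.

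\textbf{Step 1: Formal completions at $k$-points.} Since $f$ is étale and both $x,x' = f(y')$ have residue field $k$ (as $y,y' \in U(k)$ and $k$ is algebraically closed), the induced maps on completed local rings
\[
 \^{\cO_{X,x}} \lto{} \^{\cO_{U,y}}, \qquad \^{\cO_{X,x'}} \lto{} \^{\cO_{U,y'}}
\]
are isomorphisms. This is the standard consequence of étale being flat and unramified with trivial residue field extension. Consequently, for any $y' \in U(k)$ mapping to $x' \in X(k)$, we have $\^U_{y'} \isom \^X_{x'}$ and $\^U_y \isom \^X_x$, so $\^U_{y'} \isom \^U_y$ if and only if $\^X_{x'} \isom \^X_x$. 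This yields the first claim $f^{-1}(\iso(X,x)) = \iso(U,y)$.

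\textbf{Step 2: Scheme-theoretic identification.} Consider the pullback $V := U \times_X X^{(x)} = f^{-1}(X^{(x)})$. Its underlying set of $k$-points is $f^{-1}(\iso(X,x)) = \iso(U,y)$ by Step 1. Since étale morphisms are stable under base change, the projection $V \to X^{(x)}$ is étale. Because $X^{(x)}$ is reduced by definition and étale morphisms preserve reducedness (étale being smooth of relative dimension zero, cf.\ \SPC{033B}), the scheme $V$ is reduced. Thus $V$ is a reduced closed subscheme of $U$ whose $k$-points coincide with $\iso(U,y)$, so by uniqueness $V = U^{(y)}$.

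\textbf{Step 3: Conclusion.} The restriction $U^{(y)} \to X^{(x)}$ is identified with the base change $V \to X^{(x)}$ of the étale morphism $f$ along the closed immersion $X^{(x)} \inj X$, and is therefore étale.

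The only potentially delicate point is the preservation of reducedness under étale base change in Step 2; I expect this to follow directly from the cited Stacks Project tag, so no genuine obstacle is anticipated. The argument in Step 1 implicitly requires that $x$ and $y$ have the same residue field, which is why the statement is formulated using $k$-points over an algebraically closed $k$.
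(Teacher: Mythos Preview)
Your proof is correct and follows the same approach as the paper: both use that \'etale morphisms induce isomorphisms on formal completions for the first claim, and then identify $U^{(y)}$ with the fiber product $U \times_X X^{(x)}$ via \'etale base change preserving reducedness for the second. One minor slip: $V$ is a locally closed (not closed) subscheme of $U$, since $X^{(x)}$ is only locally closed in $X$, but this does not affect the argument.
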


\begin{proof}
    The first assertion follows from the fact that, for $y' \in U(k)$ and $x' = f(y')$ the morphism $f$ induces an isomorphism on completions $\^U_{y'} \isom \^X_{x'}$. To see the second claim, consider the fiber diagram
    \[
        \xymatrix{U \times_X X^{(x)} \ar[r] \ar[d] & X^{(x)} \ar[d] \\
                    U \ar[r]_f & X.}
    \]
     As a base change of $f$ the morphism $U \times_X X^{(x)} \to X^{(x)}$ is \'etale again and in particular, since $X^{(x)}$ is reduced, so is $U \times_X X^{(x)}$. Thus $U^{(y)} \isom U\times_X X^{(x)}$ and we are done.
\end{proof}

\begin{proposition}
    \label{p:isosing-smooth}
     Let $X$ be a variety over an algebraically closed field $k$ and let $x \in X(k)$. Then $X^{(x)}$ is smooth over $k$.
\end{proposition}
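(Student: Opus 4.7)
The plan is to follow the roadmap sketched by the authors just before the statement: use generic smoothness to locate a point of $\iso(X,x)$ at which $X^{(x)}$ is smooth, and then transport this smoothness to $x$ itself via a common étale neighborhood. Since the choice of base point within the isosingular class is immaterial, the same argument then shows smoothness at every $k$-point of $X^{(x)}$, which is enough by finite-typeness.

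First, I would observe that $X^{(x)}$ is a reduced scheme of finite type over the algebraically closed (hence perfect) field $k$. Consequently, its smooth locus $(X^{(x)})_{\sm}$ is a dense open subscheme (this is the usual generic smoothness for reduced schemes over a perfect field, which unlike generic smoothness of morphisms does not require characteristic zero). In particular, there exists a closed point $x' \in (X^{(x)})_{\sm}(k) \subseteq \iso(X,x)$ at which $X^{(x)}$ is smooth. Note that $x' \in \iso(X,x)$ means $\^X_x \isom \^X_{x'}$, so $\iso(X,x) = \iso(X,x')$ and hence $X^{(x)} = X^{(x')}$ as subschemes of $X$.

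Next, I would apply \cref{l:artin-etale-nbhd} to the pair $(x,x')$ to obtain a common étale neighborhood: an étale map $U \to X$ through two different morphisms $u, u' : U \to X$ and a point $y \in U(k)$ with $u(y) = x$ and $u'(y) = x'$. By \cref{l:isosing-etale}, the induced maps $U^{(y)} \to X^{(x)}$ (via $u$) and $U^{(y)} \to X^{(x')} = X^{(x)}$ (via $u'$) are both étale. Since smoothness is preserved along étale morphisms in both directions, smoothness of $X^{(x)}$ at $x' = u'(y)$ pulls back to smoothness of $U^{(y)}$ at $y$, and then pushes forward via $u$ to smoothness of $X^{(x)}$ at $u(y) = x$.

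The above argument proves that $X^{(x)}$ is smooth at the original point $x$; but since the choice of $x$ within $\iso(X,x)$ is arbitrary (the isosingularity scheme only depends on the class), the same reasoning shows that $X^{(x)}$ is smooth at every $k$-point of itself, i.e.\ at every closed point. The smooth locus of $X^{(x)}$ is open, and a nonempty closed subscheme of a scheme of finite type over $k$ always contains a closed point, so the complement of the smooth locus is empty and $X^{(x)}$ is smooth over $k$. I do not expect any serious obstacle: the two delicate ingredients (the étale-neighborhood version of Artin approximation and the étale preservation of the isosingularity scheme) have already been established as \cref{l:artin-etale-nbhd,l:isosing-etale}, and the only subtlety worth checking is that generic smoothness is applicable in positive characteristic, which it is because $k$ is perfect and $X^{(x)}$ is reduced.
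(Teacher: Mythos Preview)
Your proposal is correct and follows essentially the same route as the paper's proof: generic smoothness (valid since $X^{(x)}$ is reduced over a perfect field) yields a smooth point $x'$, and then \cref{l:artin-etale-nbhd} and \cref{l:isosing-etale} transport smoothness from $x'$ to $x$ through a common \'etale neighborhood. The only cosmetic difference is that the paper phrases the transport via isomorphisms of formal completions $\^{(X^{(x)})}_x \isom \^{(U^{(y)})}_y \isom \^{(X^{(x')})}_{x'}$ rather than the equivalent pull-back/push-forward of smoothness along \'etale maps, and leaves the passage from ``smooth at $x$'' to ``smooth everywhere'' implicit, whereas you spell it out.
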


\begin{proof}
    By definition $X^{(x)}$ is geometrically reduced and thus the subset of $k$-smooth points of $X^{(x)}$ is dense open (see \SPC{056V}). Thus there exists $x' \in X^{(x)}(k) = \iso(X,x)$ smooth over $k$. By \cref{l:artin-etale-nbhd} there exists a common \'etale neighborhood $(U,y)$ of $x$ and $x'$. Thus, by \cref{l:isosing-etale}, we have $\^{(X^{(x)})}_x \isom \^{(U^{(y)})}_{y} \isom \^{(X^{(x')})}_{x'}$. Clearly $X^{(x')} \isom X^{(x)}$, and thus $\^{(X^{(x)})}_x$ is formally smooth over $k$, which in turn implies that $X^{(x)}$ is smooth at $x$.
\end{proof}

%
%

\section{The proof of \cref{t:thm-b} and separability of the orbit map}
\label{s:thm-b}

This section is devoted to the proof of \cref{t:thm-b}, which will involve studying the orbit map $\cK \to o(f)$. As both sides are non-Noetherian schemes of infinite dimension, we first need to introduce the right notion of smoothness in this setting.

\begin{definition}
    Let $k$ be a ring and $(R,\fm)$ be a local $k$-algebra. We say that $R$ is \emph{formally smooth} over $k$ if for every $k$-algebra $C$ with nilpotent ideal $J \subset C$ and every diagram
    \[
        \xymatrix{R \ar[r]^-{\bar{\psi}} \ar@{-->}[rd]^-\psi & C/J \\
                  k \ar[u] \ar[r] & C \ar[u]}
    \]
    such that $\bar{\psi}(\fm^n) = 0$ in $C/J$ there exists a diagonal arrow $\psi: R \to C$ making the diagram commute.
\end{definition}

Note that this is equivalent to saying that $(R,\fm)$ considered as a topological ring with respect to its $\fm$-adic topology is formally smooth in the sense of \cite[(19.3.1)]{ega-iv-pt1}.

\begin{lemma}
\label{l:formally-smooth}
    Let $(R,\fm)$ and $(S,\fn)$ be local $k$-algebras with $R/\fm = S/\fn = k$.
    \begin{enumerate}
        \item Assume $R = \varinjlim_n R_n$ with $\{(R_n,\fm_n)\}_{n\in \N}$ a direct system of local $k$-algebras smooth over $k$. Then $R$ is formally smooth over $k$.
        \item Assume $\vp: R\to S$ is a local map and $R$, $S$ are formally smooth over $k$. If the induced cotangent map $T^*\vp: \fm/\fm^2 \to \fn/\fn^2$ is injective, then there exists a retraction $\^S \to \^R$ of the completion map $\^\vp: \^R \to \^S$.
    \end{enumerate}
\end{lemma}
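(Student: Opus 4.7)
The plan for (1) is standard: the continuity assumption $\bar\psi(\fm^n)=0$ lets one work level by level. Restricting to each $R_m$ gives $\bar\psi_m\colon R_m\to C/J$, which admits a lift $\psi_m\colon R_m\to C$ by the usual (non-topological) formal smoothness of the smooth $k$-algebra $R_m$. The only technicality is compatibility: given $\psi_m$, any preliminary lift of $\bar\psi|_{R_{m+1}}$ differs from $\psi_m$ on $R_m$ by a $k$-derivation into $J$, which one extends to $R_{m+1}$ and uses to correct the lift. The direct limit of such a compatible family then yields the desired $\psi$.

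For (2), the plan is to construct the retraction $\pi\colon\^S\to\^R$ as the inverse limit of a compatible system of local ring maps
\[
 \pi_k\colon \^S/\fn^{k+1}\lra\^R/\fm^{k+1},\quad \pi_k\circ\^\vp_k=\id_{\^R/\fm^{k+1}},
\]
for $k\geq 0$, with the trivial base case $\pi_0=\id_k$. Given $\pi_k$, I would first lift the composite $\^S\twoheadrightarrow\^S/\fn^{k+1}\xrightarrow{\pi_k}\^R/\fm^{k+1}$ along the square-zero surjection $\^R/\fm^{k+2}\twoheadrightarrow\^R/\fm^{k+1}$ using formal smoothness of $\^S$; by locality this descends to a map $\tilde\pi_{k+1}\colon\^S/\fn^{k+2}\to\^R/\fm^{k+2}$. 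Both $\tilde\pi_{k+1}\circ\^\vp_{k+1}$ and the identity lift $\id_{\^R/\fm^{k+1}}$, so their difference is a $k$-derivation
\[
 \delta\colon \^R/\fm^{k+1}\lra\fm^{k+1}/\fm^{k+2}.
\]

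The central point---and the only step using the injectivity hypothesis---is to extend $\delta$ along $\^\vp_k$ to a $k$-derivation $D\colon\^S/\fn^{k+1}\to\fm^{k+1}/\fm^{k+2}$. Since the target is annihilated by $\fm$ and $\^R$, $\^S$ are formally smooth, such derivations correspond to $k$-linear maps out of the cotangent spaces $\fm/\fm^2$, resp.\ $\fn/\fn^2$; the injectivity of $T^*\vp$ then allows me to extend the $k$-linear map attached to $\delta$ by choosing a $k$-vector-space complement of $T^*\vp(\fm/\fm^2)\subset\fn/\fn^2$ and setting the extension to zero on it. Setting $\pi_{k+1}:=\tilde\pi_{k+1}+D$ then produces a local ring map---since $D$ takes values in a square-zero ideal, the Leibniz correction vanishes---which by construction lifts $\pi_k$ and satisfies $\pi_{k+1}\circ\^\vp_{k+1}=\id$. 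Taking the inverse limit over $k$ produces the desired retraction $\pi$.
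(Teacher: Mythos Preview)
Your argument for (1) has a genuine gap. After the (implicit) reduction to $J^2=0$, the step ``extend the derivation $R_m\to J$ to $R_{m+1}$'' requires the restriction map $\Der_k(R_{m+1},J)\to\Der_k(R_m,J)$ to be surjective, which amounts to $\Omega_{R_m/k}\otimes_{R_m}R_{m+1}\to\Omega_{R_{m+1}/k}$ being split injective. That holds when the transition map $R_m\to R_{m+1}$ is smooth, but the lemma does not assume this. Concretely, take $R_0=k[x]_{(x)}$, $R_1=k[y]_{(y)}$ with $x\mapsto y^2$, and $C=k[\epsilon]/(\epsilon^2)$, $\bar\psi(y)=0$. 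The lift $\psi_0\colon x\mapsto\epsilon$ of $\bar\psi|_{R_0}$ cannot be extended to any lift $\psi_1$ of $\bar\psi|_{R_1}$, since $\psi_1(y)\in(\epsilon)$ forces $\psi_1(x)=\psi_1(y)^2=0$; equivalently, the restriction $\Der_k(R_1,J)\to\Der_k(R_0,J)$ is the zero map. The paper avoids this by using the criterion of \cite[(19.5.4)]{ega-iv-pt1}: formal smoothness of a local $k$-algebra with residue field $k$ is equivalent to $\Sym_k(\fm/\fm^2)\to\gr(R)$ being an isomorphism, and this property passes directly to the filtered colimit.

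Your argument for (2) is correct (up to a harmless sign in $\pi_{k+1}=\tilde\pi_{k+1}\pm D$), but it is a different route from the paper's. Using the $\gr$-criterion above, the paper identifies $\^R\cong k[[x_i\mid i\in I]]$ and $\^S\cong k[[x_j\mid j\in J]]$ for a basis $\{x_j\}_{j\in J}$ of $\fn/\fn^2$ extending one $\{x_i\}_{i\in I}$ of the subspace $T^*\vp(\fm/\fm^2)$; under these identifications $\^\vp$ is the obvious inclusion and the retraction is $x_j\mapsto 0$ for $j\notin I$. Your inductive lift-and-correct construction recovers the same conclusion without first exhibiting the completions as power series rings, which makes it more self-contained but longer.
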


Compare the second assertion with the fact that any submersion between smooth manifolds allows for a local section.

\begin{proof}
    Let us start by proving (1). Since $R_n$ is smooth over $k$, it is in particular formally smooth over $k$. By \cite[(19.5.4)]{ega-iv-pt1} this is equivalent to the natural map
    \[
        \Sym_k (\fm_n/\fm_n^2) \to \gr(R_n)
    \]
    being a bijection. The colimit of these maps is given by
    \[ 
        \Sym_k (\fm/\fm^2) \to \gr(R)
    \]
    and thus this map is a bijection, which, again by \cite[(19.5.4)]{ega-iv-pt1}, implies that $R$ is formally smooth over $k$.
        
    To prove (2), let $x_j \in S$, $j\in J$, be elements whose images form a basis for $\fn/\fn^2$ and such that for $I \subset J$ the images of $x_i$, $i\in I$, form a basis for the subspace $\fm/\fm^2$. Then the bijection $\Sym_k(\fm/\fm^2) \isom \gr(R)$ from before induces an isomorphism
    \[
        k[[x_i \mid i\in I]] := \varprojlim_n k[x_i \mid i\in I]/(x_i \mid i\in I)^n \to \^R
    \]
    and similarly for $\^S$. In particular, the map $\^\vp$ is given by the inclusion
    \[
        k[[x_i \mid i\in I]] \hookrightarrow k[[x_j \mid j\in J]],
    \]
    which has a obvious retraction defined by $x_j \mapsto 0$ for $j\in J\setminus I$.
\end{proof}

Let us now go back to the situation of \cref{t:thm-b}, that is, $X$ is a variety over an algebraically closed field $k$ and for $x\in X(k)$ we let $X^{(x)}$ be the associated isosingularity scheme. We keep the notation of the last section. Our main result will establish the existence of ``enough'' regular derivations on $\^{\cO_{X,x}}$. Recall that a derivation $d \in \Der_k(k[[\xx]])$ is called \emph{regular} if there exists $g\in k[[\xx]]$ with $d(g) \in k[[\xx]]^*$.

\begin{lemma}
 \label{l:tangent-vectors-to-derivations}
 Assume that there exists $\b_0>0$ such that the orbit map $\cK_\b \to o(f_\b)$ is separable for all $\b \geq \b_0$. Then, for every tangent vector $a \in T_x X^{(x)}$ there exists a regular derivation $d_a \in \Der_k(k[[\xx]])$ satisfying $d_a(f) \subset (f)$ and $d_a(x) = a$.
\end{lemma}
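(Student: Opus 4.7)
The plan is to use the separability assumption to lift $a$ through the truncated orbit maps at every finite level, and then combine these truncated lifts into a single formal derivation by applying Artin approximation to a suitable polynomial system.

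First I would note that by \cref{c:trunc-contact-eq}, for all sufficiently large $\b$ the morphism $\g_\b$ restricts to $X^{(x)} \to o(f_\b)$, where $f_\b = \g_\b(x) = f(\xx+x) \bmod (\xx)^{\b+1}$. A direct computation identifies the induced map $T_x X^{(x)} \to T_{f_\b} o(f_\b)$ with $a = (a_1,\ldots,a_N) \mapsto \sum_i a_i \partial_{x_i} f_\b$. Differentiating the orbit map $\cK_\b \to o(f_\b)$, $(M,\vp) \mapsto M \cdot (f_\b \circ \vp)$, at the identity $(I,\id)$ yields the linear map $(A,\psi) \mapsto A \cdot f_\b + \sum_i \psi_i \partial_{x_i} f_\b$, where $A$ ranges over matrices in $k^{n\times n}[\xx]/(\xx)^{\b+1}$ and $\psi$ over tuples in $((\xx)\cdot k[\xx]/(\xx)^{\b+1})^N$ (the latter coming from $\vp(0)=0$). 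The separability hypothesis forces this map to be surjective onto $T_{f_\b} o(f_\b)$, so for each $\b \geq \b_0$ there exist $(A_\b, \psi_\b)$ with
\[
  A_\b \cdot f_\b + \sum_i (\psi_\b)_i \partial_{x_i} f_\b \equiv \sum_i a_i \partial_{x_i} f_\b \pmod{(\xx)^{\b+1}}.
\]

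Setting $\eta_\b := a - \psi_\b$ (so that $(\eta_\b)_i(0) = a_i$) rearranges the above to $\sum_i (\eta_\b)_i \partial_{x_i} f_\b \equiv A_\b \cdot f_\b \pmod{(\xx)^{\b+1}}$. Thus $(A_\b, \eta_\b)$ is, for every sufficiently large $\b$, an approximate solution modulo $(\xx)^{\b+1}$ of the polynomial system
\[
  \sum_i \eta_i \partial_{x_i} f(\xx+x) - A \cdot f(\xx+x) = 0
\]
in unknowns $A \in k[[\xx]]^{n\times n}$ and $\eta \in k[[\xx]]^N$, subject to $\eta_i(0) = a_i$. Applying \cref{t:uniform-strong-artin-approximation} with $\a = 1$, and choosing $\b$ to match the threshold output by the theorem, would then promote this to an honest algebraic power series solution $(A,\eta)$ with the same constant term for $\eta$, preserving $\eta_i(0) = a_i$.

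Finally I would set $d_a := \sum_i \eta_i \partial_{x_i} \in \Der_k(k[[\xx]])$. Then $d_a(f_j(\xx+x)) = (A \cdot f(\xx+x))_j \in (f(\xx+x))$ for every $j$, so $d_a(f) \subset (f)$; reducing $d_a$ modulo $(\xx)$ gives $\sum_i a_i \partial_{x_i}|_0 = a$, so $d_a(x) = a$; and in the case $a \ne 0$, picking $j$ with $a_j \neq 0$, the image $d_a(x_j) = \eta_j$ has nonzero constant term $a_j$, hence is a unit, showing $d_a$ is regular. The principal obstacle is the passage from the family of truncated lifts---provided by separability at each finite level, but a priori not mutually compatible under truncation---to a genuine power series solution; this gap is exactly what Artin approximation bridges, with the subtlety being the need to encode the side condition $\eta(0) = a$ so that it is preserved under approximation (which forces $\a \geq 1$).
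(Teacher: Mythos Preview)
Your argument is correct and reaches the same conclusion as the paper, but the passage from finite to infinite level is handled differently. The paper does not invoke Artin approximation here: instead it observes that both $\cK$ and the orbit $Z=o(f)$ are (co)limits of smooth schemes, applies \cref{l:formally-smooth} to conclude that the local rings at $1$ and $f$ are formally smooth, and then uses injectivity of the cotangent map (equivalently, surjectivity of $T_1\cK\to T_fZ$, which follows from the truncated surjectivity you also establish) to produce an actual section $\^Z_f\to\^\cK_1$ of the formal orbit map. Evaluating this section on the $k[\eps]$-point coming from $a$ yields the desired power series derivation directly. Your route is more elementary in that it bypasses the formal-smoothness machinery of \cref{l:formally-smooth} entirely, replacing it with a single application of \cref{t:uniform-strong-artin-approximation} to the \emph{linear} system $\sum_i\eta_i\,\partial_{x_i}f - A\cdot f=0$; as a bonus you obtain $\eta$ and $A$ as algebraic (not merely formal) power series. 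The paper's approach, on the other hand, produces a section defined on the whole formal neighborhood $\^Z_f$, which is conceptually cleaner and could in principle be exploited beyond the tangent level. One small remark: the fact that $\g_\b$ restricts to a scheme morphism $X^{(x)}\to o(f_\b)$, which you need in order to know that $\sum_i a_i\,\partial_{x_i}f_\b$ lands in $T_{f_\b}o(f_\b)$, does not really require \cref{c:trunc-contact-eq}; it already follows for every $\b$ from the compatibility diagram \eqref{eq:contact-grp-compatible} together with the fact that $X^{(x)}$ is reduced and $o(f_\b)$ is locally closed.
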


\begin{proof}
 Write $Z_\b := o(f_\b)$ and let $\b \geq \b_0$. By \cref{s:thm-a}, $\cK_\b$ and $Z_\b$ are nonsingular varieties over an algebraically closed field and thus the orbit map $\cK_\b \to Z_\b$ is generically smooth. Since it is $\cK_\b$-equivariant (and the action of $\cK_\b$ on both sides is obviously transitive) this implies that it is smooth everywhere. In particular, the tangent map $T_1 \cK_\b \to T_{f_\b} Z_\b$ is surjective for all $\b \geq \b_0$. Setting $Z := o(f) \subset \cO^n$, we get that $T_1 \cK \to T_f Z$ is surjective. As both $\cK$ and $Z$ are colimits of schemes smooth over $k$, by \cref{l:formally-smooth} the morphism of formal schemes
 \[
  \Phi: \^\cK_1 \to \^Z_f 
 \]
 admits a section $\~\Psi: \^Z_f \to \^\cK_1$. Write $\Psi$ for the composition of the map $\g: \^{(X^{(x)})}_x \to \^Z_f$ with $\~\Psi$; this gives a factorization of $\g$ as
 \[
  \xymatrix{\^{(X^{(x)})}_x \ar[r]^-\Psi & \^\cK_1 \ar[r]^-\Phi & \^Z_f.}
 \]
 Let us analyze what this factorization means for the tangent map of $\g$. We may assume for convenience's sake that $x=0$. To proceed we consider the functorial description of formal neighborhoods via \emph{test rings} $(A,\fm)$, that is, $A$ is a local $k$-algebra with $A/\fm = k$ and $\fm^n = 0$ for some $n$. Let $a=(a_1,\ldots,a_N)$ be an $A$-point of $\^{(X^{(x)})}_x$, that is, $a_i \in \fm$ and $f(a_1,\ldots,a_N)= 0$. The map $\Psi(A)$ is given by
 \[
  a \mapsto (M(a),\vp(a)),\: M(a) \in \GL_n(A[[\xx]]),\vp(a) \in \Aut_k(A[[\xx]]),
 \]
 where $M(a) \equiv 1 \mod \fm$ and $\vp(a) \equiv \xx \mod \fm$. Composing with $\Phi(A)$ gives $\g(A)$ and thus the identity
 \begin{equation}
     \label{eq:func-identity}
     f(\xx+a) = M(a) \cdot f(\vp(a)).
 \end{equation}
 To compute $T_0 \g$ we take $A = k[\eps]/(\eps^2)$ and let $a = \~a \eps$ with $\~a \in k^N$. Taking Taylor expansions on both sides of \eqref{eq:func-identity} gives
 \[
  f(\xx) + \frac{\partial f}{\partial x}(\xx) \cdot \~a \eps = (1 + \~M(\~a) \eps) (f(\xx) + \frac{\partial f}{\partial \xx}(\xx) \cdot \~\vp(\~a)\eps),
 \]
 with $\~M(\~a) \in k[[\xx]]^{n \times n}$ and $\~\vp(\~a) \in k[[\xx]]^n$. Simplifying yields
 \begin{equation}
  \label{eq:tangent-map}
  \frac{\partial f}{\partial x}(\xx) \cdot (\~a - \~\vp(\~a)) = \~M(\~a) \cdot f(\xx).
 \end{equation}
 Define $ d:= \frac{\partial}{\partial \xx} \cdot (\~a - \~\vp(\~a)) \in \Der_k(k[[\xx]])$. As $\xx+\~\vp(\~a)\in \Aut_k(k[[\xx]])$, it follows that $d(0) = \~a$ and by \eqref{eq:tangent-map} we have $d(f) \subset (f)$. 
\end{proof}

Now the proof of \cref{t:thm-b} follows from \cref{l:tangent-vectors-to-derivations} together with the following variant of the classical Nagata--Zariski--Lipman criterion:

\begin{theorem}
 \label{t:zariski}
 Assume that $k$ is of characteristic $0$ and $X$ is a variety over $k$. Let $X' \subset X$ be a subvariety which is smooth of dimension $m$ at a point $x\in X'(k)$. Assume that for a choice of local coordinates $x'_1,\ldots,x'_m$ for $X'$ at $x$ the associated derivations $dx'_1,\ldots,dx'_m$ lift to derivations on $X$. Then
 \[
  \^X_x \isom \^{(X')}_x \times \^Y_y,
 \]
 for some variety $Y$ and $y \in Y(k)$.
\end{theorem}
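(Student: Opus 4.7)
\emph{Setup.} First I would work in an affine neighborhood of $x$ in $X$ and lift the local coordinates $x'_1,\ldots,x'_m$ of $X'$ at $x$ to regular functions $\tilde x'_1,\ldots,\tilde x'_m \in \cO_{X,x}$, taking $Y$ to be the closed subscheme of $X$ cut out by $\tilde x'_1 = \cdots = \tilde x'_m = 0$ and setting $y := x$. Writing $R := \cO_{X,x}$ and extending the hypothesised lifts of $\partial/\partial x'_i$ to derivations $D_1,\ldots,D_m \in \Der_k(\^R)$, the condition that each $D_i$ restricts on $X'$ to $\partial/\partial x'_i$ forces $D_i(\tilde x'_j) \equiv \delta_{ij} \pmod{I(X')}$, so the matrix $M := (D_i(\tilde x'_j)) \in \^R^{m\times m}$ is the identity modulo $\fm_x$ and is invertible. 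After replacing $(D_1,\ldots,D_m)$ by $M^{-1}(D_1,\ldots,D_m)$ I may assume $D_i(\tilde x'_j) = \delta_{ij}$ exactly in $\^R$, and the theorem reduces to producing a $k$-algebra isomorphism
$$\^R \;\isom\; \^{\cO_{Y,y}}[[\tilde x'_1,\ldots,\tilde x'_m]],$$
since $\^{\cO_{X',x}} \isom k[[x'_1,\ldots,x'_m]]$ then gives the asserted product decomposition.

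\emph{The one-variable case.} Next I would handle the case $m = 1$, which is the following classical assertion: for a complete Noetherian local $k$-algebra $(A,\fm)$ with $\charK k = 0$, an element $z \in \fm$ and a derivation $D \in \Der_k(A)$ with $D(z) = 1$, there exists a $k$-algebra isomorphism $A \isom (A/(z))[[z]]$. The construction is via the formal Taylor map
$$\pi\colon A \lra (A/(z))[[t]], \qquad r \longmapsto \sum_{n \geq 0} \frac{\overline{D^n(r)}}{n!}\, t^n,$$
which is well-defined exactly because $n!$ is invertible in characteristic zero and which is a $k$-algebra homomorphism sending $z \mapsto t$ by a direct Leibniz computation. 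To invert $\pi$ I would construct a canonical section $\sigma \colon A/(z) \to \ker D$ by $\sigma(\overline r) := \sum_{n \geq 0} \tfrac{(-z)^n}{n!} D^n(r)$ (well-defined in the complete ring $A$), verify $D\circ\sigma = 0$ and $\sigma(\overline r) \equiv r \pmod{(z)}$ directly, and show $\ker D \cap (z) = 0$ via the iterated identity $r = (z^k/k!)\,D^k(r)$ combined with Krull's intersection theorem. This realises $\sigma$ as a coefficient-ring section and makes $\pi$ an isomorphism.

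\emph{Induction and main obstacle.} For $m > 1$ I would apply the one-variable case to $(\tilde x'_1, D_1)$ to obtain $\^R \isom S[[\tilde x'_1]]$ with $S := \^R/(\tilde x'_1)\^R$. Since $D_i(\tilde x'_1) = 0$ for $i \geq 2$, each such $D_i$ preserves the ideal $(\tilde x'_1)\^R$ and descends to a derivation $\bar D_i$ on $S$ satisfying $\bar D_i(\overline{\tilde x'_j}) = \delta_{ij}$ for $j \geq 2$. Applying the inductive hypothesis to $S$ with $\bar D_2,\ldots,\bar D_m$ and $\overline{\tilde x'_2},\ldots,\overline{\tilde x'_m}$ yields $S \isom \^{\cO_{Y,y}}[[\tilde x'_2,\ldots,\tilde x'_m]]$, and combining with the previous step produces the desired splitting. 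The main obstacle, and the essential use of characteristic zero, lies entirely within the one-variable step: both the construction of $\pi$ and the vanishing $\ker D \cap (z) = 0$ need $n!$ to be invertible. In positive characteristic these obstructions are real, and one would have to switch from ordinary derivations to Hasse--Schmidt derivations (with their built-in divided-power operators $D^{[n]}$) to have any chance of an analogous result, as indicated in the introduction.
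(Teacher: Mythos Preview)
Your argument is correct and is precisely the classical proof that the paper invokes by citation: the paper's own proof of this theorem is the single line ``See for example \cite[Theorem 30.1]{Mat89}'', and what you have written is essentially the content of that reference (the exponential/Taylor map construction together with an induction on $m$).

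One small slip worth fixing: in the step $\ker D \cap (z) = 0$, the iterated identity you want applies to $s$ (where $r = zs$), namely $s = \tfrac{(-z)^k}{k!} D^k(s)$, not to $r$; since $D(r)=0$, your stated identity $r = \tfrac{z^k}{k!}D^k(r)$ would already give $r=0$ at $k=1$ without any appeal to Krull. With the corrected identity for $s$ the Krull argument works, or alternatively you can bypass it entirely by noting that your projector $\tau(r) := \sum_n \tfrac{(-z)^n}{n!}D^n(r)$ satisfies $\tau|_{\ker D} = \id$ while $\tau$ annihilates $(z)$, forcing $\ker D \cap (z) = 0$ directly.
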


\begin{proof}
 See for example \cite[Theorem 30.1]{Mat89}.
\end{proof}

\begin{proof}[Proof of \cref{t:thm-b}]
 Assume $\charK k = 0$, then the orbit map $\cK_\b \to o(f_\b)$ is separable since it is dominant. Thus \cref{l:tangent-vectors-to-derivations,t:zariski} together imply that
 \[
   \^X_x \isom \^{(X^{(x)})}_x \times \^Y_y,
 \]
 for some variety $Y$ and $y \in Y(k)$. Using \cref{l:dim-isosing} below it follows that $\^Y_q$ itself has no smooth factors.
\end{proof}

\begin{lemma}
\label{l:dim-isosing}
 Let $X$ be a scheme of finite type over an algebraically closed field $k$ and let $x \in X(k)$. Assume that $\^X_x \isom \^Y_y \times \^{(\A^m)}_0$. Then $\dim_x X^{(x)} \geq m$.
\end{lemma}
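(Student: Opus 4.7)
The idea is that the smooth factor $\^{(\A^m)}_0$ in $\^X_x$ should, after passing to an étale neighborhood, produce an honest $m$-dimensional family of points in $X$ with pairwise isomorphic formal neighborhoods; translating in the $\A^m$-direction is the source of this family. The only subtle point is algebraizing the smooth factor, which is done by Artin approximation exactly as in \cref{l:artin-etale-nbhd}.

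\textbf{Step 1 (common étale neighborhood).} The formal isomorphism $\^X_x \isom \^{(Y \times \A^m)}_{(y,0)}$ combined with Artin's approximation theorem (as in \cite[Corollary 2.5]{Art69}, the same input used in \cref{l:artin-etale-nbhd}) yields a common étale neighborhood: there exist a scheme $U$, a point $u\in U(k)$, and étale morphisms
\[
 \pi_X: U \to X,\qquad \pi: U \to Y \times \A^m
\]
with $\pi_X(u) = x$ and $\pi(u) = (y, 0)$.

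\textbf{Step 2 (the $m$-dimensional slice).} Let $L := \{y\} \times \A^m \subset Y \times \A^m$, a smooth locally closed subscheme of dimension $m$, and set $V := \pi^{-1}(L) \subset U$. Since $\pi$ is étale, $V$ is smooth of dimension $m$ at $u$. For every $u' \in V(k)$, writing $\pi(u') = (y, a)$ for some $a\in \A^m(k)$, translation by $a$ gives $\^{(\A^m)}_a \isom \^{(\A^m)}_0$, hence
\[
 \^U_{u'} \isom \^{(Y\times\A^m)}_{(y,a)} \isom \^Y_y \times \^{(\A^m)}_0 \isom \^U_u.
\]
Applying the étale map $\pi_X$, which induces isomorphisms on formal completions, we get $\^X_{\pi_X(u')} \isom \^X_x$, so $\pi_X(u') \in \iso(X,x) = X^{(x)}(k)$.

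\textbf{Step 3 (dimension count).} The composition $\pi_X|_V : V \to X$ is étale followed by a locally closed immersion reversed, i.e.\ a quasi-finite morphism (being the composition of the locally closed immersion $V \hookrightarrow U$ with the étale morphism $U \to X$). In particular, its image $W := \pi_X(V) \subset X$ is a constructible subset with $\dim_x W = \dim_u V = m$. By Step 2, $W(k) \subset X^{(x)}(k)$, so since $k$ is algebraically closed and both $W$ and $X^{(x)}$ are constructible in $X$, we conclude $W \subset X^{(x)}$. Therefore $\dim_x X^{(x)} \geq \dim_x W = m$, as required.

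\textbf{Main obstacle.} The only nontrivial input is Step 1: producing an étale neighborhood of $X$ at $x$ which also dominates $Y \times \A^m$ near $(y,0)$. This is an instance of Artin's approximation/algebraization theorem and is parallel to how \cref{l:artin-etale-nbhd} is used in the proof of \cref{p:isosing-smooth}; once available, the remainder of the argument is a direct fiber-dimension computation.
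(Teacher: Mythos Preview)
Your argument is correct and follows the same route as the paper's proof. The paper is just more terse: it invokes \cref{l:isosing-etale} directly to identify $\iso(U,u)$ with the preimages of $\iso(X,x)$ and of $\iso(Y\times\A^m,(y,0))$ under the two \'etale maps, then uses $\{y\}\times\A^m\subset\iso(Y\times\A^m,(y,0))$ and the fact that $U^{(u)}\to X^{(x)}$ is \'etale to get the dimension bound---exactly what your Steps 2--3 unpack by hand.
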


\begin{proof}
 By \cref{l:artin-etale-nbhd} we can find a common \'etale neighborhood $U$ for $x \in X$ and $y' = (y,0) \in Y \times \A^m$. Clearly $\iso(Y \times \A^m, y') \supset \{y\} \times \A^m$ and thus we are done using \cref{l:isosing-etale}.
\end{proof}

We now want to discuss the existence of a decomposition as in \cref{t:thm-b} for $k$ of positive characteristic. As the following example shows, this fails in general.

\begin{example}
 \label{ex:whitney-char-p}
 Let $k$ be of characteristic $p > 0$ and $X$ be the Whitney umbrella given by $x^p + y^p z$ in $\A^3_k$. We claim that $\^X_0$ is isomorphic to $\^X_x$, where $x = (0,0,t)$ for some $t \neq 0$. As $k$ is algebraically closed, there exists $s \in k$ with $s^p = t$. Consider now the change of coordinates $\vp$ given by
 \[
  x \mapsto x + y s, \: y \mapsto y, \: z \mapsto z.
 \]
 Then $\vp(x^p + y^p z) = x^p + y^p(z+t)$ and thus $\^X_0 \isom \^X_x$. In particular, we have that $\iso(X,0)$ is just the $z$-axis. 
 
 We claim that $\^X_0$ has no smooth factors and sketch the argument here. By an extension of \cref{t:zariski} (see for example \cite[Exercise 30.1]{Mat89}) it is sufficient to show that there does not exist a regular continuous \emph{Hasse--Schmidt derivation} $D \in \Der_k^\infty(\^{\cO_{X,0}})$. That is, there does not exist a map $D: \^{\cO_{X,0}} \to \^{\cO_{X,0}}[[t]]$ of $\^{\cO_{X,0}}$-algebras which is continuous for the respective adic topologies and such that there exists an element $g \in \^{\cO_{X,0}}$ with $g(0) = 0$ and $D(g)$ invertible. To that avail, suppose such a map $D$ is given by
 \[
  D(x) = \sum_{i\geq0} \~x_i t^i, \:  D(y) = \sum_{i\geq0} \~y_i t^i, \: D(z) = \sum_{i\geq0} \~z_i t^i,
 \]
 satisfying $\~x_0 = x$ and so on. Applying $D$ to the equation $x^p+y^pz = 0$ yields a system of equations for $\~x_i, \~y_i, \~z_i$. For simplicity we will give them explicitly only in the case $p = 2$:
 \begin{align*}
    y^2 \~z_1 & = 0 \\
    \~x_1^2 + y^2 \~z_2 + \~y_1^2 z & = 0 \\
    \ldots & 
 \end{align*}
 From the first equation it follows that $\~z_1 = 0$ and from the second that $\~x_1(0) = 0$. Now suppose that $\~y_1$ is invertible. Then the second equation gives
 \[
  z = - \frac{y^2 \~z_2 + \~x_1^2}{\~y_1^2}.
 \]
 Note that the right hand side has order $\geq 2$, which gives a contradiction. Thus it follows that for any $g \in \^{\cO_{X,0}}$ with $g(0) = 0$ we have that $D(g)$ is not invertible.
\end{example}

One of the main assumptions in the proof of \cref{t:thm-b} was the separability of the orbit map $\cK_\b \to o(f_\b)$ for $\b \gg 0$. As observed already in \cite[Example 2.9]{GP18}, this fails in positive characteristics for general isolated singularities. The example provided there was the cusp singularity $f = y^2 + x^3$ for $\charK(k)=2$. While obviously not applicable to the case of isolated singularities, \cref{l:tangent-vectors-to-derivations} can be used to construct related examples where the separability of the orbit map fails.

\begin{example}
 \label{ex:whitney-char-p-ii}
 Let $k$ be algebraically closed with $\charK k =2$ and consider the deformation $\~f = x^2 +y^3 + z y^2 \in k[x,y,z]$ of the cusp singularity $f = x^2 + y^3$. Set $X = V(\~f) \subset \A^3$; we claim that $X^{(0)} = V(x,y) \isom \A^1$. If $t\neq0$ and $x = (0,0,t)$, then an isomorphism between $\^X_0$ and $\^X_x$ is given by the map
 \[
  x \mapsto x + y s, \: y \mapsto y, \: z \mapsto z,
 \]
 where $s\in k$ with $s^2 = t$. However, there does not exist $d\in \Der_k(k[[x,y,z]])$ satisfying $d(\~f) \subset (\~f)$ and $d(0)=(0,0,1)$, as can be verified with an argument similar to the one in \cref{ex:whitney-char-p}. Therefore, by \cref{l:tangent-vectors-to-derivations} the orbit map $\cK_\b \to o(\~f_\b)$ is inseparable for infinitely many $\b>0$.
 
 Note that the same argument also works for $f = x^p +y^pz$ and $\charK k =p$, as in \cref{ex:whitney-char-p}.
\end{example}

As mentioned in the introduction, these examples prompt the question whether inseparability of the orbit map is the main obstruction to extending \cref{t:thm-b} to positive characteristics. We expect a further investigation into this problem to shed more light on the formal structure of singularities for $\charK k =p$.

\printbibliography[
heading=bibintoc,
title={References}
]

\end{document}